\documentclass[a4paper, 12pt, oneside, notitlepage]{amsart}
\usepackage[margin=3cm]{geometry}
\usepackage{amsmath,amssymb,amsthm,graphicx,mathrsfs,bbm,url,stmaryrd,amscd,cancel}
\usepackage{amsthm}
\usepackage{wrapfig}
\usepackage{enumitem}
\usepackage{mathtools}
\usepackage[utf8]{inputenc}
 \usepackage[T1]{fontenc}
\usepackage[usenames,dvipsnames]{color}
\usepackage[colorlinks=true,linkcolor=Red,citecolor=Green]{hyperref}
\usepackage[super]{nth}
\usepackage[open, openlevel=2, depth=3, atend]{bookmark}
\hypersetup{pdfstartview=XYZ}
\usepackage[font=footnotesize]{caption}
\usepackage{a4wide}
\usepackage{tikz}
\usepackage{tikz-cd}

\usepackage{epstopdf}
 
\usepackage{hyperref}

\theoremstyle{plain}
\newtheorem{theorem}{Theorem}[section]
\newtheorem*{theorem*}{Theorem}

\newtheorem{lemma}[theorem]{Lemma}
\newtheorem{proposition}[theorem]{Proposition}
\newtheorem{corollary}[theorem]{Corollary}

\newtheorem{conjecture}[theorem]{Conjecture}

\theoremstyle{definition}
\newtheorem{definition}[theorem]{Definition}

\theoremstyle{remark}
\newtheorem{remark}[theorem]{Remark}

\numberwithin{equation}{section}

\newcommand{\C}{\mathbb{C}}
\newcommand{\R}{\mathbb{R}}
\newcommand{\Q}{\mathbb{Q}}
\newcommand{\Z}{\mathbb{Z}}

\newcommand{\M}{\mathcal{M}}
\newcommand{\N}{\mathbb{N}}

\newcommand{\V}{\mathbb{V}}

\newcommand{\HH}{\mathbb{H}}
\newcommand{\Ss}{\mathbb{S}}
\newcommand{\eps}{\varepsilon}

\newcommand{\mc}{\mathcal}

\newcommand{\comp}{\mathrm{comp}}

\DeclareMathOperator{\Op}{Op}
\DeclareMathOperator{\WF}{WF}

\DeclareMathOperator{\ran}{ran}

\DeclareMathOperator{\supp}{supp}

\newcommand{\be}{\begin{equation}}
\newcommand{\ee}{\end{equation}}

\title
[Singular set on nonpositively curved surfaces]
{On the singular set of the geodesic flow on nonpositively curved surfaces}

\author[Lefeuvre]{Thibault Lefeuvre}
\address{Université Paris-Saclay, CNRS, Laboratoire de mathématiques d’Orsay, 91405, Orsay, France.}
\email{thibault.lefeuvre1@universite-paris-saclay.fr}

\begin{document}

\begin{abstract}
We prove that the singular locus of the geodesic flow on the unit tangent bundle of a nonpositively curved surface enjoys an additional regularity property: its characteristic function belongs to the geometric anisotropic Sobolev space $H^{5/8-}_xH^{-13/16}_\theta$. In particular, this implies that if the geodesic flow is not ergodic with respect to the Liouville measure, then the zero set of the curvature must be ``thick'' in the sense that it supports a nonzero, nonnegative function in $H^{5/8-}$. The proof relies on semiclassical analysis and a new radial estimate for parabolic flows.
\end{abstract}

\maketitle

\section{Introduction}

\subsection{The ergodicity conjecture in nonpositive curvature} Let $(M,g)$ be a closed connected oriented nonpositively curved Riemannian surface of genus $\geq 2$. Let $\pi : SM \to M$ be the unit tangent bundle, $(\varphi_t)_{t \in \R}$ the geodesic flow on $SM$. The Liouville probability measure $\mu$ on $SM$ is a smooth invariant measure under the geodesic flow; it is thus natural to study the ergodicity of the geodesic flow with respect to it.

\subsubsection{Background} When the curvature is negative, the geodesic flow is Anosov and ergodicity dates back to Hopf \cite{Hopf-36} in constant curvature (it was generalized later by Anosov \cite{Anosov-67} in variable curvature and higher dimensions). While the geodesic flow is \emph{not} ergodic with respect to the Liouville measure on the flat torus, it is expected to be in genus $\geq 2$. However, this is still a widely open conjecture \cite[Question 6.1.1]{Burns-Matveev-21}. The present article proves new results towards this conjecture. 

Define the following closed flow-invariant \emph{singular set} (or \emph{locus}) of $SM$:
\begin{equation}
\Lambda := \left\{v \in SM ~:~ K(\pi(\varphi_tv))=0, \forall t \in \R\right\} \subset SM,
\end{equation}
where $K$ is the sectional curvature. It is a standard result due to Pesin (see \cite[Theorem 10.11]{Barreira-Pesin-13} for instance) that the geodesic flow is ergodic with respect to $\mu$ on the complement of $\Lambda$. The ergodicity conjecture is implied by the following folklore conjecture:

\begin{conjecture}
\label{conjecture:folklore}
The singular set $\Lambda$ has zero Liouville measure.
\end{conjecture}

Little is known on the set $\Lambda$: it is closed, flow-invariant and its complement is an open dense set with positive Liouville measure (see \cite[Theorem 10.11]{Barreira-Pesin-13}). Wu \cite{Wu-15} proved ergodicity under the assumption that the number of connected components of $\{K < 0\}$ is finite. Historically, the ergodicity of the geodesic flow was erroneously asserted in the 1970s, as documented in \cite[Theorem 10.7]{Pesin-77} and subsequent works \cite{Pesin-77-2, Pesin-78, Pesin-79, Pesin-81}. This claim was later used in the 1980s by various authors \cite{Ballmann-Brin-82, Burns-83, Ballmann-Brin-Eberlein-85}, though the fundamental flaw lay in the (still unknown) fact that $\Lambda$ has measure zero. 

Regarding the measure of maximal entropy it is known to be unique and ergodic on nonpositively-curved surfaces \cite{Knieper-98} (more generally, on all compact rank one manifolds). More recently, this was shown for certain metrics without conjugate points \cite{Climenhaga-Knieper-War-21}.

\subsubsection{Main result} The following theorem is the main result of the present paper:

\begin{theorem}
\label{theorem}
Let $(M,g)$ be a closed connected oriented Riemannian surface of genus $\geq 2$ with nonpositive sectional curvature. Then the indicator function $\mathbf{1}_\Lambda$ of $\Lambda$ satisfies:
\[
\mathbf{1}_\Lambda \in H^{5/8-}_x H^{-13/16}_\theta(SM).
\]
\end{theorem}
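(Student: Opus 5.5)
Write $X$ for the geodesic vector field on $SM$, $V$ for the vertical vector field generating rotation in the fibers, and $X_\perp=[X,V]$. The structure equations are $[V,X_\perp]=X$ and $[X,X_\perp]=KV$. The plan is to exploit the fact that on $\Lambda$ the curvature vanishes identically along orbits. Hence $u:=\mathbf{1}_\Lambda$ satisfies $Xu=0$ (flow invariance) and $Ku=0$. Both hold in $\mathcal{D}'(SM)$, since $u\in L^\infty$ and $\Lambda$ is closed and invariant.

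The heuristic is this. On $\Lambda$ the commutator $[X,X_\perp]=KV$ vanishes, so the flow behaves like a parabolic (flat) flow. In Fourier expansion in the fibre, $u=\sum_k u_k$, the equation $Xu=0$ reads $\eta_-u_{k+1}+\eta_+u_{k-1}=0$, where $X=\eta_++\eta_-$. We want to trade regularity in $\theta$ for regularity in $x$.

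The key steps, in order:

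\begin{enumerate}
\item \emph{Microlocal setup.} Work on $T^*SM$ with the semiclassical quantization. Introduce the anisotropic space $H^s_xH^r_\theta$, with weight $\langle\xi_x\rangle^s\langle\xi_\theta\rangle^r$ adapted to the splitting into horizontal and vertical covectors. The characteristic set of $X$ is $\{\langle\xi,X\rangle=0\}$. Elliptic regularity for $X$ shows that $\WF(u)$ lies in $E_s^*\oplus E_u^*$-type directions, i.e.\ spanned by $X_\perp^*$ and $V^*$.

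\item \emph{Dynamics of the symplectic lift on $\Lambda$.} Linearize the Jacobi equation $\ddot J+KJ=0$. Where $K=0$ it gives the parabolic shear: the lifted flow $\Phi_t$ maps $V^*$ into $V^*+tX_\perp^*$, which has linear (not exponential) growth. Frequencies therefore drift polynomially toward the radial set $\mathbb{R}X_\perp^*$ (or $V^*$, depending on the time direction). The condition $Ku=0$ allows us to replace $KV$ by $0$ when commuting against $u$, up to terms controlled by the regularity of $K$.

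\item \emph{Parabolic radial estimate.} This is the new ingredient. Build an escape/commutator function $m=\langle\xi_x\rangle^{s}\langle\xi_\theta\rangle^{r}\chi$, homogeneous and decreasing along the polynomially contracting flow. Then show $\Im\langle \Op(m)Xu,\Op(m)u\rangle$ controls $\|u\|_{H^s_xH^r_\theta}$ microlocally near the radial set, modulo a lower-order term. Since the contraction is only linear in $t$, the gain is fractional rather than arbitrary. Balancing the scales $|\xi_\theta|\sim h^{-\alpha}$ and $|\xi_x|\sim h^{-1}$ in a second-microlocal (anisotropic) calculus should produce the exponents. The threshold condition will be of the form $s+\tfrac12 r<$ something, with the loss coming from the size of the remainder in $Ku=0$.

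\item \emph{Initialization and closing.} Start from $u\in L^2=H^0_xH^0_\theta$, and improve the regularity away from the radial set by propagation. Then apply the radial estimate and iterate, gaining a fixed fraction at each step until the threshold is reached. I expect the iteration to stop exactly at $s=5/8-$, $r=-13/16$, i.e.\ at the point where the sharp Gårding remainder of the exotic calculus is no longer small.
\end{enumerate}

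The main obstacle is step 3. The standard radial estimates (Dyatlov–Zworski) use exponential contraction, which parabolic dynamics lacks. Moreover, the symbols needed to capture the polynomial drift live in an exotic class $S_{\rho,\delta}$ with $\rho+\delta=1$ in the $\theta$ direction. My first attempt would be to use the $[V,X_\perp]=X$ relation directly: apply commutator estimates with the operators $V$ and $X_\perp$ to $u$, using $Xu=0$. This would give quantitative estimates like $\|X_\perp u\|_{H^{-1/2}}\lesssim\|Vu\|$, and then interpolate to produce the fractional exponents.
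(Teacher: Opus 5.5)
There is a genuine gap. The plan never produces the exponents, and the microlocal mechanism you describe is not the one that works.

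\textbf{The lifted dynamics and the radial estimate.} You have the cotangent dynamics backwards. Over $\Lambda$ the cotangent lift is the inverse transpose of the tangent shear $V\mapsto V+tH$: $\xi_\beta$ is conserved and $\xi_\psi\mapsto\xi_\psi-t\xi_\beta$. So $\V^*$ is pointwise fixed, and every other direction, in particular $\HH^*$, is pushed polynomially into $\V^*$ in both time directions. The $\partial_\theta$-component of the induced field on $\partial_\infty\Sigma$ vanishes to second order at $\V^*$, see \eqref{equation:order2}. Hence $\HH^*$ is not a radial set, and no estimate of this type gives regularity at $\V^*$: on the flat torus every $f(\theta)$ with $f\in L^2$ is invariant.

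What does hold is an escape-function estimate \emph{near} $\HH^*$. One uses an exotic symbol $a\in S^0_\delta$ that is non-decreasing along the flat lifted flow up to $\mc{O}(h^\delta)$ and satisfies $\Phi_{t_0}^*a-a=1$ near $\HH^*\cap\Sigma^+$. The step you pass over (``$Ku=0$ lets us replace $KV$ by $0$'') is the real difficulty. The sign of $\Phi_{t_0}^*a-a$ depends on $K$ along whole orbit segments, so it is controlled only over the closed, possibly wild set $\Lambda$. A pseudodifferential operator does not localize to $\Lambda$ in $x$. The paper handles this by quantizing with the FBI transform, inserting the rough cutoff $\mathbf{1}_{\Lambda((h/\langle\xi\rangle)^{1/2-\eta})}$ and using $\supp u\subset\Lambda$, at a cost of $h^{1/2-\delta}$. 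Balancing this against $h^\delta$ forces $\delta=1/4$ and gives $\|C_hu\|^2_{L^2}=\mc{O}(h^{1/4-})$, i.e.\ microlocal $H^{1/8-}$ near $\HH^*$, in one step. There is no iteration, and the calculus remainder $h^{1-2\delta}=h^{1/2}$ is not what limits the exponent.

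\textbf{From $H^{1/8-}$ near $\HH^*$ to the anisotropic space.} Even granting the estimate above, your plan contains nothing that turns it into $H^{5/8-}_xH^{-13/16}_\theta$; you announce the numbers rather than derive them. Three ingredients are missing.
\begin{enumerate}
\item[(a)] The G\'erard--Golse averaging lemma. It applies thanks to the twist property ($\mathbf{X}$ is transverse to $\{\langle\xi,V\rangle=0\}$ on $\Sigma\cap\HH^*$), and it upgrades the estimate to $u_k\in H^{1/2+1/8-}$ for every Fourier mode.
\item[(b)] The Beurling transform. It satisfies $\|B_k\|_{L^2\to L^2}\le 1$ by the Pestov identity and $K\le 0$, and it relies on the mapping properties of $\eta_\pm$ in genus $\geq 2$, a hypothesis your plan never uses. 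Combined with complex interpolation between $L^2$ and $H^1_x$ and applied to $\eta_+u_k+\eta_-u_{k+2}=0$, it gives $\|u_k\|^2_{H^{1/2+\eps}_x}\lesssim\langle k\rangle^{1/2+\eps}\|u\|^2_{L^2}+\|u_0\|^2_{H^{1/2+\eps}_x}$.
\item[(c)] The summation. $\sum_k\langle k\rangle^{1/2+\eps-2r}$ is finite iff $r>3/4+\eps/2$, and letting $\eps\uparrow 1/8$ gives $r=13/16$.
\end{enumerate}
Thus $5/8=\tfrac12+\tfrac12\cdot\tfrac14$ and $13/16=\tfrac12\bigl(1+\tfrac12+\tfrac18\bigr)$ come from this global Fourier-mode argument, not from a threshold in an anisotropic radial estimate.
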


The Sobolev spaces $H^s_xH^{t}_\theta(SM)$ are introduced in \S\ref{ssection:spaces}; for now, let us simply say that they correspond to functions which are $H^s$ in the base variable $x \in M$, and $H^t$ in the velocity variable $\theta$ of the circle fibration $SM \to M$. More generally, Theorem \ref{theorem} holds for any $L^2$ function which is invariant by the geodesic flow. We believe that it should be possible to prove that for any $s > 0$, there exists $t > 0$ such that $\mathbf{1}_\Lambda \in H^{s}_x H^{-t}_\theta(SM)$.

\subsubsection{Consequences} We now explain a straightforward consequence of Theorem \ref{theorem}. Let $u_0 := \pi_* \mathbf{1}_\Lambda \in L^\infty(M)$ be the pushforward of the characteristic function, that is,
\begin{equation}
\label{equation:intro2}
u_0 (x) = \dfrac{1}{2\pi} \int_{S_xM} \mathbf{1}_\Lambda(x,\theta) |d\theta| = \mu_{S_xM}(\Lambda \cap S_xM),
\end{equation}
where $\mu_{S_xM}$ is the normalized arclength measure on the circle. Note that $\pi_* \mathbf{1}_\Lambda$ is supported in $\{K=0\}$. By construction, $u_0 \in L^\infty(M)$.

\begin{corollary}
\label{corollary}
The function $u_0$ lies in $H^{5/8-}(M)$.
\end{corollary}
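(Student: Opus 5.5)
The corollary follows from Theorem \ref{theorem} once we check that pushing forward along the circle fibration $\pi : SM \to M$ is continuous from $H^{s}_xH^{t}_\theta(SM)$ to $H^s(M)$ for every $t \in \R$. Heuristically, $\pi_*$ integrates over the fiber, i.e. pairs with the constant function $1$ in $\theta$. That function is smooth, so the fiber regularity index $t$ plays no role, and only the base regularity $s$ survives.

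Concretely, I would use the Fourier decomposition along the fibers of $SM \to M$. Any $u \in C^\infty(SM)$ splits as $u = \sum_{k \in \Z} u_k$, where $u_k$ is a section of the $k$-th power of the canonical line bundle; in local isothermal coordinates, $u_k(x,\theta) = \tilde u_k(x) e^{ik\theta}$. The pushforward keeps only the zeroth mode: $\pi_* u = u_0$, where $u_0$ is identified with a function on $M$. With the definition of the spaces in \S\ref{ssection:spaces}, presumably through this decomposition, $\|u\|_{H^s_xH^t_\theta}^2$ is comparable to $\sum_k \langle k\rangle^{2t}\|u_k\|_{H^s}^2$, with suitable $H^s$ norms on sections of the line bundles. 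The $k=0$ term is then $\|\pi_* u\|_{H^s(M)}^2$, so $\|\pi_*u\|_{H^s(M)} \lesssim \|u\|_{H^s_xH^t_\theta}$ for any $t$.

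If the spaces are instead defined by an anisotropic pseudodifferential quantization, say $\Op(\langle \xi_h\rangle^{s}\langle\xi_V\rangle^{t})$ with horizontal and vertical components, I would argue by duality. For $f \in C^\infty(M)$ we have $\langle \pi_*u, f\rangle_{L^2(M)} = \langle u, \pi^*f\rangle_{L^2(SM)}$. The function $\pi^* f$ has no vertical frequency, since $V\pi^*f = 0$. Its wavefront-set considerations therefore lie on $\{\xi_V = 0\}$, up to the smoothing errors of the calculus, and so $\|\pi^*f\|_{H^{-s}_xH^{-t}_\theta} \lesssim \|f\|_{H^{-s}(M)}$. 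Combining these gives $\|\pi_*u\|_{H^s} \lesssim \|u\|_{H^s_xH^t_\theta}$.

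Finally, apply this with $u = \mathbf{1}_\Lambda$, $t = -13/16$, and any $s < 5/8$. By Theorem \ref{theorem}, $u_0 = \pi_*\mathbf{1}_\Lambda \in H^{s}(M)$ for all $s<5/8$, which is the claim. The only step needing care is the continuity estimate for $\pi^*$ into the anisotropic space, or equivalently checking that the space's norm genuinely decouples in the Fourier modes. That depends on how \S\ref{ssection:spaces} sets up the spaces; I expect it to be routine either way.
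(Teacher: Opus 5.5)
Your proposal is correct and is essentially the paper's argument: the paper presents the corollary as a straightforward consequence of Theorem~\ref{theorem}, obtained by keeping the $k=0$ summand of the mode-by-mode norm that defines $H^s_xH^t_\theta(SM)$. Your duality fallback is not needed, since $X^2+H^2$ commutes with $V$ and acts on $\Omega_0$ as the Laplacian of $M$; note also that in the paper's logic the same statement is exactly the $k=0$ case of Lemma~\ref{proposition:regularity-fourier-modes}, which the proof of Theorem~\ref{theorem} itself uses.
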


In fact, this regularity holds for \emph{all} Fourier modes of $\mathbf{1}_{\Lambda}$ when decomposing $\mathbf{1}_\Lambda$ into Fourier modes in the circle fibers of $\pi : SM \to M$. The function $u_0$ corresponds to the $0$-th Fourier mode. Observe that, by the Sobolev trace theorem, since $5/8 > 1/2$, the restriction of $u_0$ to smooth curves is now well-defined. More generally, for any $a \in C^\infty(SM)$, one can define $u_a := \pi_*(a \mathbf{1}_\Lambda)$ and the same proof shows that $u_a \in H^{5/8-}(M)$. 

Since $u_0$ (and, more generally, $u_a$ for $a \in C^\infty(SM)$) is supported in $\{K=0\}$, Corollary \ref{corollary} implies that, if $\Lambda$ has positive Liouville measure, then the zero set $\{K=0\}$ is ``thick'' in the sense that it supports a nonzero nonnegative function of regularity $H^{5/8-}$. More precisely, the following holds:

\begin{corollary}
\label{corollary:ai}
Assume that $\Lambda \subset SM$ has positive Liouville measure. Let $I, J \subset \R$ be two closed intervals and $\Phi : I \times J \to U \subset M$ be a smooth coordinate chart such that
\[
\int_U u_a(x) |dx| > 0,
\]
where $|dx|$ is the Riemannian volume on $M$. Then there exists an open interval $I_0 \subset I$, a measurable set $A \subset J$ of positive Lebesgue measure, and $c > 0$ such that
\begin{equation}
\label{equation:ua-positive}
u_a(\Phi(t,y)) \geq c > 0, \qquad \forall (t,y) \in I_0 \times A.
\end{equation}
In particular, for all $(t,y) \in I_0 \times A$, $S_{\Phi(t,y)}M \cap \Lambda$ has Lebesgue measure at least $c$.
\end{corollary}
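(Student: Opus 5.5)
The argument is a Fubini-type reduction of the positivity of $\int_U u_a$ to a one-dimensional statement, which is then handled with the Sobolev regularity of Corollary \ref{corollary}, valid for $u_a$ as noted after it. Replacing $a$ by $a_+$ if necessary, we may assume $u_a \ge 0$. Concretely, we want $a \geq 0$, so that $u_a = \pi_*(a\mathbf{1}_\Lambda) \geq 0$, and the statement about the fibers requires $a$ bounded. Set $v := u_a\circ \Phi$ on $I\times J$. Since $\Phi$ is a smooth chart on a compact rectangle (shrinking slightly if needed), $v \in H^{s}(I\times J)$ for every $s<5/8$, and in particular for some fixed $s\in(1/2,5/8)$. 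The hypothesis $\int_U u_a>0$ gives $\int_{I\times J} v\,dt\,dy>0$, because the Jacobian of $\Phi$ is bounded above and below.

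\textbf{Step 1: slicing.} We use the standard fact that $H^s(I\times J)\hookrightarrow L^2_y(J; H^s_t(I))$ for $s\ge0$. This follows from the Fourier characterization on $\R^2$ after extension, since $\langle \tau\rangle^{2s}\le \langle(\tau,\eta)\rangle^{2s}$. Hence for a.e. $y\in J$ the slice $v(\cdot,y)$ lies in $H^s(I)$. Since $s>1/2$, each such slice has a continuous representative in $C^{0,s-1/2}(I)$, with Hölder norm $\le C\|v(\cdot,y)\|_{H^s(I)}$. Moreover this representative agrees a.e. with the original function, by Fubini.

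\textbf{Step 2: choosing a good set of $y$.} By Fubini, $\int_J\big(\int_I v(t,y)\,dt\big)dy>0$. Hence the set $B$ of $y$ with a continuous slice and $\int_I v(t,y)dt>0$ has positive measure. For each $y\in B$ there is a point $t_y\in I$ with $v(t_y,y)>0$. We then make this uniform in three moves.
\begin{enumerate}
\item Restrict $B$ to a positive-measure subset on which $\|v(\cdot,y)\|_{H^s}\le L$ and $\max_I v(\cdot,y)\ge 2c$ for some fixed $L,c>0$. This is possible because $B$ is a countable union over $L,c$.
\item Pigeonhole $t_y$ into one of finitely many subintervals of length $\delta$. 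Then there is a fixed subinterval $I_1$ and a positive-measure set $A$ such that $\max_{I_1} v(\cdot,y)\ge 2c$ for $y\in A$. Measurability of the sets involved is routine, using the continuity in $t$ of the slices and countable dense subsets.
\item The Hölder bound gives equicontinuity: $|v(t,y)-v(t',y)|\le CL|t-t'|^{s-1/2}$ uniformly in $y\in A$.
\end{enumerate}

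\textbf{Step 3: obtaining a uniform interval.} Step 3 is the main obstacle. The location of the maximum still varies with $y$, so equicontinuity alone only gives a $y$-dependent interval. The fix is to pigeonhole more finely. Choose $\delta$ so small that $CL\delta^{s-1/2}\le c$, and run the pigeonhole of Step 2 with intervals of length $\delta$; this is legitimate since $L$ was fixed before $\delta$. If $t_y\in I_0:=I_1$ (of length $\delta$) and $v(t_y,y)\ge 2c$, then $v(t,y)\ge 2c-c=c$ for all $t\in I_0$. We take $I_0$ to be the interior of this interval, which gives \eqref{equation:ua-positive} for a.e. $(t,y)\in I_0\times A$, with the continuous representative.

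For the final claim, take $a\equiv 1$, so that $u_1=u_0=\mu_{S_xM}(\Lambda\cap S_xM)$. The pointwise statement is then about the continuous-in-$t$ representative, and it holds up to null sets.
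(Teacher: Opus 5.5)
\textbf{Gap: the conclusion is only obtained almost everywhere.}

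Your Steps 1--3 are sound. They give a positive-measure set $A$, an open interval $I_0$ and $c>0$ such that the continuous representative $f_y$ of $t\mapsto u_a(\Phi(t,y))$ satisfies $f_y\ge c$ on $I_0$ for every $y\in A$. Your route to uniformity (a uniform $H^s$ bound, equicontinuity, pigeonholing into $\delta$-intervals) is valid but heavier than needed. Since each $f_y$ is continuous and somewhere positive, every such $y$ lies in one of the countably many sets $\{y: f_y\ge 1/n \text{ on } [p,q]\}$ with $p,q\in\Q$, and one of these has positive measure. This is what the paper does, and it needs no Hölder constant.

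The problem is that you stop at ``for a.e.\ $(t,y)$, with the continuous representative''. The corollary asserts \eqref{equation:ua-positive} for \emph{every} $(t,y)\in I_0\times A$, for $u_a$ defined pointwise by fiber integration. Its ``in particular'' clause is a statement about the actual set $S_{\Phi(t,y)}M\cap\Lambda$ at each such point. An a.e.\ statement cannot be upgraded to this by shrinking $A$, since every slice could still contain a nonempty null set of bad $t$'s. As written, your argument proves a strictly weaker result.

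\textbf{The missing idea: upper semicontinuity.} This comes from the closedness of $\Lambda$, which your proof never uses.
\begin{enumerate}
\item Since $\Lambda$ is closed, $\mathbf 1_\Lambda$ is upper semicontinuous on $SM$.
\item The reverse Fatou lemma in a local trivialization $SU\simeq U\times\Ss^1$ (integrands bounded by $1$) then gives $\limsup_{x_n\to x}u_0(x_n)\le u_0(x)$.
\item Fix $y\in A$, after discarding the null set of $y$ whose slice does not agree a.e.\ with $f_y$, and fix $t\in I_0$.
\item Pick $t_j\to t$ in the full-measure, hence dense, set where $u_0(\Phi(t_j,y))=f_y(t_j)$.
\item Then $u_0(\Phi(t,y))\ge\limsup_j u_0(\Phi(t_j,y))=f_y(t)\ge c$.
\end{enumerate}
This holds at every point of $I_0\times A$, which gives the fiber statement.

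The same argument works for $u_a$ when $a\ge 0$ is continuous, because $a\mathbf 1_\Lambda$ is then still upper semicontinuous. This is the only place where the sign of $a$ matters.

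Your opening reduction ``replace $a$ by $a_+$'' does not supply this. It changes the function whose positivity you are asserting. Also, $a_+$ is not smooth, so the $H^{5/8-}$ regularity of $u_{a_+}$ is not directly available. Steps 1--3 never use $u_a\ge 0$, so drop that reduction and instead assume $a\ge 0$ (automatic for $a\equiv 1$) at the upgrade step.
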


On the patch $U \subset M$, one may choose a smooth section $e : U \to SU$. An \emph{angular sector} $\Omega \subset \Ss^1$ is an interval of the circle; it defines a subset $S\Omega := \{(x,R_\theta e(x)) ~:~ \theta \in \Omega\} \subset SU$, where $R_\theta$ is the fiberwise rotation in $SM$ by angle $\theta$. One can then apply Corollary \ref{corollary:ai} with $a \in C^\infty(SM)$ such that $a|_{SU}$ is supported in $S\Omega$. Provided \eqref{equation:ua-positive} holds, Corollary \ref{corollary:ai} shows that there are ``many'' flat complete geodesics in $U$ with a tangent direction in $\Omega$.

Unfortunately, Corollary \ref{corollary:ai} does not show that $\{K=0\}$ must have non-empty interior provided $\Lambda$ has positive Liouville measure. For that, one would need to establish that $u_0 \in H^{1+}(M)$; by the embedding $H^{1+}(M) \hookrightarrow C^0(M)$, this would imply that $u_0$ coincides almost everywhere with a continuous function, hence $\{K=0\}$ has nonempty interior. 

\subsection{Microlocal regularity outside of the co-vertical space}

\label{ssection:microlocal}

%
%
%
%
%

The strategy of proof for Theorem \ref{theorem} is to establish that the characteristic function $\mathbf{1}_\Lambda \in L^\infty(SM)$ of $\Lambda$ is more regular than \emph{a priori} expected. Studying the regularity of solutions to partial differential equations is the realm of microlocal analysis. It is therefore natural to expect that progress towards Conjecture \ref{conjecture:folklore} could be obtained by means of microlocal methods.

\subsubsection{Cotangent bundle geometry} The tangent space to the unit tangent bundle $SM$ decomposes as
\[
T(SM) = \R X \oplus \HH \oplus \V,
\]
where $X$ is the geodesic vector field (generator of the geodesic flow), $\V$ is the \emph{vertical} distribution, namely, the tangent space to the circle fibers of $SM$ and $\HH$ is the \emph{horizontal} distribution, namely, the orthogonal to $\V \oplus \R X$ with respect to the natural Sasaki metric on $SM$. Similarly, the cotangent bundle $T^*(SM)$ can be decomposed as
\begin{equation}
\label{equation:splitting}
T^*(SM) = \R \alpha \oplus \HH^* \oplus \V^*,
\end{equation}
where $\alpha$ is the Liouville contact $1$-form (such that $\iota_X \alpha = 1, \iota_X d\alpha = 0$) and the two other distributions are defined by $\HH^*(\R X \oplus \V) = 0$, $\V^*(\R X \oplus \HH) = 0$.


\subsubsection{Microlocal regularity} We refer to \S\ref{ssection:analysis} for the definition of the wavefront set of a distribution and of a pseudodifferential operator. We will show the following key result (below, $\mu$ denotes the Liouville probability measure on $SM$):

\begin{theorem}
\label{theorem:main}
Let $(M,g)$ be a closed connected oriented Riemannian surface with nonpositive sectional curvature. Let $u \in L^2(SM,\mu)$ be an invariant function under the geodesic flow. Then for all $\eta > 0$, for all $A \in \Psi^0(SM)$ such that $\WF(A) \cap \V^* = \emptyset$, $A u \in H^{1/8-\eta}(SM)$. In particular, $u$ is microlocally $H^{1/8-}$ near $\HH^*$.
\end{theorem}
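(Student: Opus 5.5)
Throughout, $X$ is the geodesic vector field, $V$ the vertical vector field generating the fiberwise rotation, and $X_\perp := [X,V]$. They satisfy the structure equations $[V,X]=X_\perp$, $[V,X_\perp]=-X$, $[X,X_\perp]=-KV$ (sign conventions as in the paper). Nonpositive curvature means $-K\ge 0$, and this sign is what we will exploit. We have $Xu=0$ and $u\in L^2$. The standard trick is to also differentiate along the other directions: $X(Vu) = -X_\perp u$ and $X(X_\perp u)= KVu$, in the distributional sense.

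\textbf{Step 1: the lifted flow on $T^*(SM)$.} Let $H(z,\xi)=\langle \xi, X(z)\rangle$ be the principal symbol of $-iX$. Since $Xu=0$, the wavefront set of $u$ is contained in the characteristic set $\{H=0\} = \HH^*\oplus\V^*$. Propagation of singularities then says that $\WF(u)$ is invariant under the symplectic lift $\Phi_t$ of $\varphi_t$. In the frame dual to $(X,X_\perp,V)$, write $\xi=(0,\xi_H,\xi_V)$. The lifted flow acts on $(\xi_H,\xi_V)$ by the transpose of the linearized flow, i.e. by the Jacobi equation $\dot \xi_H = K\xi_V$, $\dot\xi_V = -\xi_H$ (up to signs). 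In the flat region $K=0$ this dynamics is parabolic rather than hyperbolic: it is a shear that moves $\xi$ towards $\V^*$ linearly in time, with $|\xi_V|\sim t|\xi_H|$ and $\xi_H$ constant. So, in the projectivized cotangent space, $\V^*$ is a (degenerate) attractor/source for both $t\to\pm\infty$. Where $K<0$ the convergence is even faster, since the Riccati solutions in nonpositive curvature are monotone. Points of $\HH^*$ are therefore not trapped: they flow in finite angular distance towards $\V^*$, at rate $\sim 1/t$.

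\textbf{Step 2: a parabolic radial estimate at $\V^*$, in the low-regularity direction.} Starting from $u\in L^2$, we want to gain regularity away from $\V^*$. The plan is to mimic the positive-commutator proof of radial estimates (Melrose, Dyatlov–Zworski). Take an escape function of the form
\[
m(\xi) = \langle\xi\rangle^{2s}\,\chi\big(|\xi_H|/|\xi|\big),
\]
or better a weight built from the angle $\rho=\xi_H/\xi_V$. Along the flow this angle satisfies $\dot\rho = -1-\rho^2 +(\text{curvature term } -K\ge 0\text{ pushing the same way})$ up to normalization, i.e. a Riccati equation whose sign is fixed by $K\le 0$. Computing $H_H m$, the derivative of $m$ along the lifted flow:
\begin{itemize}
\item the angular derivative has a sign on the region between $\HH^*$ and $\V^*$;
\item the $\langle\xi\rangle^{2s}$ factor contributes a term of size $s\,|\xi_H||\xi_V|/|\xi|^2$, which has the wrong sign.
\end{itemize}
Because the dynamics is only parabolic (polynomial rates), the good term degenerates near $\V^*$ like $\rho^2$. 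Balancing it against the bad term forces anisotropic weights: we should use symbols in a second-microlocal or exotic class $S_{1/2}$-like near $\V^*$, localized at scale $|\xi_H|\lesssim |\xi|^{\gamma}$. The value of $s$ comes out of choosing $\gamma$ optimally, and I expect the exponent $1/8$ to arise from this optimization (matching $5/8 = 1/2+1/8$ and $13/16$ in Theorem~\ref{theorem}).

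\textbf{Step 3: closing the estimate.} Run the commutator argument $\mathrm{Im}\langle \Op(m)Xu,u\rangle = 0$. The positive part controls $\|A u\|_{H^s}$ for $A$ microsupported away from $\V^*$. The error terms are of three kinds:
\begin{itemize}
\item terms supported near $\V^*$, which are bounded by $\|u\|_{L^2}$ thanks to the exotic calculus losing at most the chosen power;
\item lower-order remainders;
\item the threshold region.
\end{itemize}
No a priori regularity beyond $L^2$ is needed, since we are in the "low regularity/source" regime where the estimate propagates regularity outward from the radial set. A regularization argument (replace $m$ by $m\,(1+\epsilon\langle\xi\rangle)^{-N}$ and let $\epsilon\to0$) justifies the pairing for an $L^2$ function $u$. \emph{The main obstacle} is Step 2: constructing an escape function in a legitimate symbol class for which the commutator is positive despite the merely polynomial contraction at $\V^*$. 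This also requires handling the non-uniformity coming from the curvature: the favorable sign of $K\le0$ must be used, never its size. I would first try working in the explicit frame, with $m=\langle\xi\rangle^{2s}f(\xi_H/\langle\xi\rangle^{1/2}\ldots)$, and optimize the exponents afterwards.
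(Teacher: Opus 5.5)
\textbf{Missing reduction to $\Lambda$.} The main missing idea is the reduction to the singular set. Without it, Step 3 cannot reach the conclusion. You never use Pesin's ergodicity of the flow on $SM\setminus\Lambda$. Off $\Lambda$ one has $E_s^*\neq E_u^*$: a perpendicular Jacobi field bounded on all of $\R$ forces $K\equiv 0$ along the geodesic. So over $SM\setminus\Lambda$ the closed flow-invariant cone $\mathcal{C}$ of Lemma~\ref{lemma:non-tangent} is a nondegenerate sector around $\V^*$ that misses $\HH^*$. Since $\mathcal{C}$ is invariant, covectors in $\mathcal{C}\setminus\V^*$ over $SM\setminus\Lambda$ never reach $\HH^*$. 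Hence neither a commutator estimate localized near $\HH^*$ nor propagation of singularities says anything about them. Controlling $Au$ there would require radial estimates at the merely continuous, nonuniformly hyperbolic lines $E_s^*,E_u^*$, which your scheme does not address.

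Your description of the non-flat dynamics is also wrong. With $\tan\theta=\xi_\beta/\xi_\psi$ one has $\dot\theta=\sin^2\theta+K\cos^2\theta$, so the curvature term has the \emph{opposite} sign to the shear. Where $K<0$, the parabolic fixed point $\V^*$ splits into the distinct lines $E_s^*,E_u^*$; covectors do not converge to $\V^*$ ``even faster''.

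The paper sidesteps all of this. Writing $u=\mathbf{1}_\Lambda u+\mathbf{1}_{SM\setminus\Lambda}u$, Pesin's theorem gives $\mathbf{1}_{SM\setminus\Lambda}u=c(1-\mathbf{1}_\Lambda)$. It therefore suffices to treat invariant $w\in L^2$ with $\supp w\subset\Lambda$, namely $w=\mathbf{1}_\Lambda u$ and $w=\mathbf{1}_\Lambda$. Then $\WF(w)$ lies over $\Lambda$, where the lifted flow is the explicit shear \eqref{equation:tm-star}. There every covector of $\Sigma\setminus\V^*$ passes through $\HH^*$ in finite time, which is what makes the final propagation step legitimate.

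\textbf{Step 2 is not carried out.} Step 2 is left open, and the exponent is read off from Theorem~\ref{theorem} rather than derived. A weight even in $\theta$, such as $\chi(|\xi_H|/|\xi|)$, cannot have a signed derivative, because over $\Lambda$ one has $\dot\theta=\sin^2\theta\ge 0$ on both sides of $\V^*$.

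The paper treats $\Sigma^+$ and $\Sigma^-$ separately, reversing $X$ for the latter. It uses a $0$-order escape function $a\in S^0_\delta$ that equals $1$ on the side of $\{\lambda\psi:\lambda<0\}$ where covectors coming from $\HH^*\cap\Sigma^+$ accumulate. On the side where they leave, $a$ is cut off at angle $h^\delta$. The second-order vanishing of $\dot\theta$ at $\V^*$ then makes the leakage $h^{-\delta}\cdot h^{2\delta}=h^\delta$.

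Instead of a commutator with the weight $\langle\xi\rangle^{2s}$, the paper runs a finite-time Egorov identity for $\langle(e^{t_0X}A_he^{-t_0X}-A_h)u,u\rangle$. It then converts $h$-decay of $\|C_hu\|^2$ into Sobolev regularity through Lemma~\ref{lemma:sobolev}.

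The bound $\Phi_{t_0}^*a-a\ge-\mathcal{O}(h^\delta)$ is established only over $\Lambda$. So the FBI transform is used, via positivity for rough symbols and its Gaussian localization together with $\supp w\subset\Lambda$, to restrict to an $(h/\langle\xi\rangle)^{1/2-\eta}$-neighbourhood of $\Lambda$. This costs $h^{1/2-\eta-\delta}$. Balancing $h^\delta$ against $h^{1/2-\delta}$ gives $\delta=1/4$, hence $\|C_hw\|^2=\mathcal{O}(h^{1/4-})$. Halving the exponent via Lemma~\ref{lemma:sobolev} gives $H^{1/8-}$. This spatial localization has no counterpart in your proposal, so the value $1/8$ cannot come out of the optimization you describe.
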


The proof of Theorem \ref{theorem:main} is based on a new \emph{radial estimate for parabolic flows}, see Proposition \ref{proposition:estimate} below. Theorem \ref{theorem:main} then implies Theorem \ref{theorem} by combining an averaging result on kinetic equations due to Gérard-Golse \cite{Gerard-Golse-92} with standard geometric estimates involving the Pestov identity. See \S\ref{section:conclusion}.

%
%
%


\subsection{Idea of proof}

The proof of Theorem \ref{theorem:main} is based on \emph{propagation of singularities}, which is a standard tool in the microlocal study of \emph{real principal type} operators such as vector fields. Roughly speaking, it asserts that if $u \in \mc{D}'(SM)$ solves $Pu = 0$ (for some operator $P$ satisfying certain natural assumptions), then the singularities of $u$ (its wavefront set) are invariant along the bicharacteristics of $P$, i.e., along the integral curves of the Hamiltonian vector field on $T^*(SM)$ induced by the principal symbol of $P$.

If $u \in L^2(SM)$ satisfies $X u = 0$, then standard elliptic theory guarantees that $\WF(u) \subset \Sigma := \HH^* \oplus \V^*$. Propagation of singularities implies that the wavefront set $\WF(u) \subset T^*(SM)$ of $u$ is invariant by the symplectic lift of the geodesic flow
\[
\Phi^{T^*(SM)}_t(v,\xi) := (\varphi_t(v), d\varphi_t^{-\top}(v)\xi),
\]
where ${}^{-\top}$ stands for the inverse transpose.

One approach to proving the ergodicity conjecture involves showing that $\WF(u) = \emptyset$, that is, $u \in C^\infty(SM)$. The transitivity of the flow would allow to conclude that $u$ is constant. The primary challenge in this method lies in establishing the existence of a direction $\xi_0 \in \Sigma$ such that $\xi_0 \notin \WF(u)$. This is what occurs for Anosov flows:  one first shows, using hyperbolic dynamics, that $\WF(u) \cap E_s^* = \emptyset$ (where $E_s^* \subset T^*(SM)$ denotes the (co)stable bundle) and similarly that $\WF(u) \cap E_u^*=\emptyset$ by simply reversing the dynamics, i.e., changing $X$ to $-X$. Finally, standard propagation of singularities shows that $\WF(u) = \emptyset$, implying that $u$ is smooth, hence constant. In nonpositive curvature, we aim to establish a similar outcome, specifically $\WF_{1/8-\eta}(u) \cap \HH^* = \emptyset$ for all $\eta > 0$, where $\WF_s$ denotes the wavefront set of a distribution in the $H^s$-regularity. However, this alone will not suffice to directly prove ergodicity of the geodesic flow. 

The pivotal aspect of the argument leading to $\WF_{1/8-\eta}(u) \cap \HH^* = \emptyset$ is the source-to-sink dynamics of the (transpose of the) differential of the geodesic flow on $\Sigma$ over the characteristic locus $\Lambda$. In the Anosov setting, the source corresponds to $E_s$, while the sink corresponds to $E_u$, and the attraction/repelling speed is exponential in time. In nonpositive curvature, a similar phenomenon still occurs; however, when restricted to the flat part, both the source and the sink collapse into the other horizontal direction and the attraction/repelling speed is polynomial in time (see Figure \ref{figure:cones} below).

\subsection{Organization of the paper} In \S\ref{section:preliminaries}, we introduce the geometric and dynamical background needed for the proofs. We review in \S\ref{section:analysis} semiclassical analysis and the FBI transform, and prove Theorem \ref{theorem:main} in \S\ref{section:proofs}. Finally, the consequences for the function $\mathbf{1}_\Lambda$ such as Theorem \ref{theorem} are derived in \S\ref{section:conclusion}. \\

\noindent \textbf{AI disclosure:} Generative AI was not used in the proof nor in the writing of the main results, namely, Theorem \ref{theorem}, Corollary \ref{corollary}, and Theorem \ref{theorem:main}. It was only used to suggest Corollary \ref{corollary:ai} after the author prompted the following question: ``What does it imply for a measurable set in $\R^2$ to support a nonzero, nonnegative function of regularity $H^{5/8-}$?''. \\

\noindent \textbf{Acknowedgement:} This manuscript was written during the summer 2022, and circulated in 2023 to a few colleagues. Over the last three years, I have occasionally tried to use Theorem \ref{theorem} to prove the Ergodicity Conjecture \ref{conjecture:folklore} but have not succeeded so far. With the flood of AI generated papers, I have decided to post this paper on the arXiv despite it not being fully conclusive so far. I believe it could be a first step towards proving the conjecture.

I am grateful to K. Burns for pointing out many missing references in an earlier draft, and to G. Knieper and F. Rodriguez Hertz for their comments. This project was supported by the European Research Council (ERC) under the European Union’s Horizon 2020 research and innovation programme (Grant agreement no. 101162990 — ADG).

\section{Geometric and dynamical preliminaries}

\label{section:preliminaries}

We refer to \cite[Chapter 1]{Paternain-99}, \cite{Knieper-02} or \cite[Chapters 13 and 15]{Lefeuvre-book} for an overview of standard facts on the geodesic flow.

\subsection{Geometry on the unit tangent bundle} Let $(M,g)$ be a closed connected oriented nonpositively-curved Riemannian surface of genus $\textsl{g} \geq 2$. The unit tangent bundle $\pi : SM \to M$ is a circle bundle over $M$ defined as
\[
SM := \left\{ (x,v) \in TM ~:~ |v|_g = 1\right\}.
\]
In what follows, a generic point in $SM$ is denoted by $v$. We will also use the compact notation $\M := SM$.

\subsubsection{Frames and coframes} Let $X \in C^\infty(\M,T\M)$ be the generator of the geodesic flow $(\varphi_t)_{t \in \R}$. Let $V \in C^\infty(\M,T\M)$ be the generator of the $\mathrm{SO}(2)$-action in the fibers, and set $H := [V,X]$. The frame $\left\{X,H,V\right\}$ is an orthonormal basis with respect to the \emph{Sasaki metric}. Define the \emph{vertical} distribution $\V := \R V$ and the \emph{horizontal} distribution\footnote{Usually, the horizontal distribution is $\R X \oplus \R H$. We adopt a slightly different terminology.} $\HH := \R H$.

The following commutation formulas hold:
\begin{equation}
\label{equation:commutateur}
[X,H] = K V, \qquad [V,X]= H, \qquad [H,V] = X,
\end{equation}
where $K$ is the sectional curvature. We also introduce the \emph{lowering} and \emph{raising} operators:
\begin{equation}
\label{equation:etapm}
\eta_\pm := \dfrac{1}{2}\left(X \mp iH \right).
\end{equation}
They satisfy the following commutation formulas:
\begin{equation}
\label{equation:commutateur2}
[-iV,\eta_+] = \eta_+, \qquad [-iV,\eta_-]=\eta_-, \qquad [\eta_+,\eta_-] = \dfrac{iK V}{2}.
\end{equation}

It will  be convenient to introduce the dual distributions to $\V,\HH$. Let $\alpha$ be the Liouville contact $1$-form such that $\alpha(X) = 1$, $\alpha(\HH \oplus \V)=0$. Define the $1$-forms $\beta,\gamma$ on $\M$ by
\[
\beta(H)=1, \beta(X)=\beta(V)=0, \qquad \gamma(V)=1,\psi(X)=\psi(H)=0.
\]
The following relations follow from \eqref{equation:commutateur}:
\begin{equation}
\label{equation:commutateur2}
d\alpha = \psi \wedge \beta, \qquad d\beta=\alpha\wedge\psi, \qquad d\psi =K  \beta \wedge \alpha.
\end{equation}
Define the \emph{co-horizontal} direction by $\HH^*=\R \beta$ and the \emph{co-vertical} direction by $\V^* = \R \psi$. Finally, we set $E_0^* := \R \alpha$. Any covector $\xi \in T^*\M$ can be decomposed as $\xi = \xi_\alpha \alpha + \xi_\beta \beta + \xi_\psi \psi$. 

\subsubsection{Normal bundle} Define the \emph{normal vector bundle} $\mc{N} \to \M$ as the trivial line bundle given at the point $v \in \M$ by $\R v^\perp$, where $v^\perp = e^{\pi/2 V}v$ is the vector $v$ rotated by an angle $+\pi/2$. Observe that there is a natural identification
\begin{equation}
\label{equation:identification}
\HH \oplus \V \simeq \mc{N}^{\oplus 2}, \qquad w_{H}H + w_{V} V \mapsto (w_{H}v^\perp,w_{V}v^\perp).
\end{equation}
This will be used in order to describe Jacobi vector fields and the differential of the geodesic flow.

\subsection{Dynamics on the unit tangent bundle} We now describe the dynamics of the geodesic flow on $\M$ and of the lifted flow to the tangent and cotangent bundles $T\M$ and $T^*\M$.

\subsubsection{Lifted dynamics}

Nonpositive curvature implies the existence of two flow-invariant continuous stable and unstable vector bundles $E_s, E_u \subset T\M$ defined as follows: there exists a constant $C > 0$ such that for all $v \in \M$:
\[
 w \in E_s(v) \iff  w \in \HH \oplus \V, \quad |d\varphi_t(v) w| \leq C |w|, \forall t \geq 0,
\]
and the same holds for $E_u$ by reversing time. We refer to \cite[Section 10.3]{Barreira-Pesin-13} for further details. These bundles integrate to smooth stable and unstable manifolds $W^{s,u}$ which are only transversally continuous.

An important feature of nonpositive curvature is that the distributions $E_s, E_u$ never intersect the vertical distribution:
\begin{equation}
\label{equation:vertical}
E_s \cap \V = E_u \cap \V = \left\{0\right\},
\end{equation}
see \cite[Proposition 10.3, item (2)]{Barreira-Pesin-13}. However, unlike in negative curvature, the bundles $E_s, E_u$ may not be everywhere transverse. Actually, it is a result of Eberlein \cite[Theorem 3.2]{Eberlein-73} that the geodesic flow is Anosov if and only if $E_s$ and $E_u$ are transverse everywhere. In this case, the geodesic flow is known to be ergodic. Over $\Lambda$, the bundles $E_s$ and $E_u$ are not transverse and they coincide with the horizontal distribution $\HH$.

The geodesic flow $(\varphi_t)_{t \in \R}$ lifts to the tangent bundle $T\M$ by taking $\Phi^{T\M}_t(v,w) := (\varphi_t(v), d\varphi_t(v)w)$. The differential of the geodesic flow can be computed using the Jacobi equation. More precisely, the following holds:
\begin{itemize}
\item for all $v \in \M, t \in \R$, $d\varphi_t(v)X(v) = X(\varphi_t(v))$;
\item for any $w \in \HH(v) \oplus \V(v)$, $d\varphi_t(v)w \in \HH(\varphi_t(v)) \oplus \V(\varphi_t(v))$; we can write $d\varphi_t(v)w = w_H(t) H + w_V(t) V$ where $w_H(t) = j(t), w_V(t) = \dot{j}(t)$, and $j$ solves the Jacobi equation
\begin{equation}
\label{equation:jacobi}
\ddot{j}(t) + K(t)j(t) = 0,
\end{equation}
where $K(t)$ is the sectional curvature at the point $\pi(\varphi_t(v)) \in M$. Note that, by the identification \eqref{equation:identification}, $d\varphi_t(v)w$ corresponds to $(j(t),\dot{j}(t))$.
\end{itemize}
In particular, as the curvature vanishes for all time over $\Lambda$, the Jacobi equation \eqref{equation:jacobi} allows to compute the expression for the lifted flow $(\Phi^{T\M}_t)_{t \in \R}$ over $\Lambda$. Decomposing $w = w_X X+w_H H+ w_V V$, one obtains
\begin{equation}
\label{equation:tm}
\Phi^{T\M}_t(v,w) = (\varphi_t(v), w_X X + (w_H + tw_V) H + w_V V).
\end{equation}

The dual distributions to $E_s$ and $E_u$ are defined by
\[
E_s^*(E_s \oplus \R X) = 0, \quad E_u^*(E_u \oplus \R X) = 0.
\]
They satisfy a non-intersection property similar to \eqref{equation:vertical}:
\begin{equation}
\label{equation:covertical}
E_s^* \cap \HH^* = E_u^* \cap \HH^* = \left\{0\right\}.
\end{equation}
The natural lift of the geodesic flow to the cotangent bundle $T^*\M$ is given by $\Phi_t^{T^*\M}(v,\xi) = (\varphi_t(v),d\varphi_t(v)^{-\top}\xi)$, where ${}^{-\top}$ stands for the inverse transpose. Decomposing $\xi = \xi_\alpha \alpha + \xi_\beta \beta + \xi_\psi \psi$, the expression of the lifted flow over $\Lambda$ is given by
\begin{equation}
\label{equation:tm-star}
\Phi^{T^*\M}_t(v,\xi) = (\varphi_t(v), \xi_\alpha \alpha + \xi_\beta \beta + (\xi_\psi - t\xi_\beta)\psi).
\end{equation} 

The \emph{characteristic set} $\Sigma := \HH^* \oplus \V^*$ will play an important role in what follows. Indeed, since $X$ has principal symbol $\sigma_X(v,\xi) = i \langle \xi, X(v) \rangle$ vanishing on $\Sigma$, it is elliptic on the complement $T^*\M \setminus \Sigma$ and thus, by standard elliptic theory, any distribution $u \in \mc{D}'(SM)$ in $\ker X$ (namely, invariant by the geodesic flow) has its wavefront set $\WF(u) \subset \Sigma$. In particular, any element $u \in \ker_{L^2}(X)$ has its wavefront set in $\Sigma$ so Theorem \ref{theorem:main} boils down to a microlocal statement in an arbitrarily small conic neighborhood of $\Sigma$.

\subsubsection{Dynamics on the radial compactification of the characteristic set}
\label{sssection:compactification}
It will be convenient to have in mind the action of the lifted geodesic flow on the radial compactification of $\Sigma$, especially over $\Lambda$. (We could also consider the radial compactification $\overline{T^*\M}$ of $T^*\M$ but since the action of the lifted flow $(\Phi_t^{T^*\M})_{t \in \R}$ is trivial in the $E_0^*$-direction, this does not really matter.) For that, define the smooth function $r := \langle \xi \rangle^{-1}$ on $\Sigma$. Then $\overline{\Sigma} := \Sigma \sqcup \left\{r = 0\right\}$ is a smooth manifold with boundary $\partial_\infty \Sigma = \left\{r = 0\right\}$ (the circle at infinity) with the smooth structure induced by taking $1/r$ as a boundary defining function. Since the lifted geodesic flow on $T^*\M$ is $1$-homogeneous, it induces a smooth flow on the radial compactification $\overline{\Sigma}$. Let $\mathbf{X}$ be the generator of $(\Phi^{T^*\M}_t)_{t \in \R}$. Then $\mathbf{X} \in C^\infty(T^*\M, T(T^*\M))$ extends to a smooth vector field on $\overline{\Sigma}$ and it is tangent to the boundary $\partial_\infty \Sigma$.

\begin{center}
\begin{figure}[htbp!]
\includegraphics{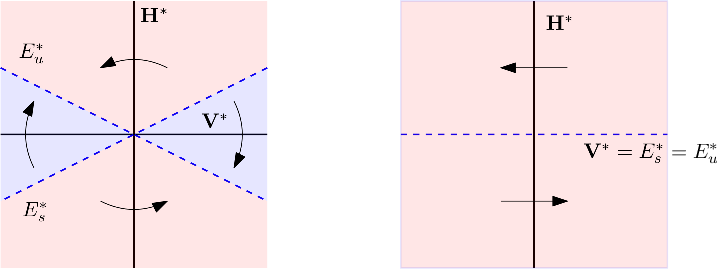}
\caption{The lifted dynamics on $\Sigma$. Left picture: dynamics near a point where $E_s^*$ and $E_u^*$ are transverse; the light blue region represent the cone $\mc{C}$. Right picture: dynamics over the set $\Lambda$ where $E_s^*=E_u^*=\V^*$; the cone $\mc{C}$ collapses here to a line.}
\label{figure:cones}
\end{figure}
\end{center} 

Since $\HH^* \oplus \V^* \to \M$ is trivial, the smooth manifold $\partial_\infty \Sigma$ is a trivial circle bundle over $\M$ which we may write
\begin{equation}
\label{equation:product}
\partial_\infty \Sigma = \M \times \Ss^{1}_\theta.
\end{equation}
Since $E_s^*$ and $E_u^*$ are lines in $\Sigma$, they define two cones in $\Sigma$ (see Figure \ref{figure:cones}). The following holds:

\begin{lemma}
\label{lemma:non-tangent}
There exists a closed flow-invariant cone $\mc{C} \subset \Sigma$ with edges given by $E_s^*$ and $E_u^*$ such that $\V^* \subset \mc{C}$ and $\HH^* \cap \mc{C} = \emptyset$. Furthermore, on $\partial_\infty \Sigma \setminus \mc{C}$, the $\partial_\theta$-component of $\mathbf{X}$ in the coordinates \eqref{equation:product} does not vanish.
\end{lemma}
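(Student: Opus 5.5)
We will work entirely in the coordinates $\xi = \xi_\beta \beta + \xi_\psi \psi$ on $\Sigma$ and compute the lifted flow through the Riccati equation. Along an orbit of $\varphi_t$, write the covector $d\varphi_t^{-\top}\xi$ in terms of $(\xi_\beta(t),\xi_\psi(t))$. Dualizing the Jacobi equation \eqref{equation:jacobi}, the cotangent components satisfy the linear system
\[
\dot\xi_\beta = K\xi_\psi, \qquad \dot\xi_\psi = -\xi_\beta .
\]
Over $\Lambda$ this reduces to \eqref{equation:tm-star}. We then use the angular coordinate $\theta$ on the fiber circle of $\partial_\infty\Sigma$, say via the slope $s = \xi_\beta/\xi_\psi$ or the angle $\theta$ with $\tan\theta = \xi_\beta/\xi_\psi$. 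Its evolution is
\[
\dot\theta = \frac{\xi_\psi\dot\xi_\beta - \xi_\beta\dot\xi_\psi}{|\xi|^2} = K\cos^2\theta + \sin^2\theta .
\]
This quantity is exactly the $\partial_\theta$-component of $\mathbf{X}$ on $\partial_\infty\Sigma$; the $\M$-component is $X$. Since $K\le 0$, we get $\dot\theta \ge \sin^2\theta - |K|\cos^2\theta$, and $\dot\theta$ can vanish only where $\tan^2\theta = -K$.

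Next we construct $\mc{C}$. The bundles $E_s^*,E_u^*$ are the annihilators of $E_s\oplus\R X$ and $E_u\oplus\R X$. Writing $E_{s,u} = \R(H + U_{s,u}V)$, where $U_{s,u}$ are the stable and unstable Riccati solutions ($\dot U + U^2 + K = 0$, with $U_s\le 0\le U_u$), the annihilator of $H+UV$ in $\HH^*\oplus\V^*$ is spanned by $\psi - U\beta$. Hence $E_{s,u}^* = \R(\psi - U_{s,u}\beta)$. Both lines are transverse to $\HH^* = \R\beta$, which is consistent with \eqref{equation:covertical}. Define $\mc{C}$ as the closed cone
\[
\mc{C} := \{\xi_\beta = -u\,\xi_\psi,\ U_s\le u\le U_u\} ,
\]
that is, covectors whose slope lies between those of $E_s^*$ and $E_u^*$. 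It contains $\V^*$ (slope $u=0$), and it avoids $\HH^*\setminus 0$ because $|U_{s,u}|<\infty$. Flow-invariance follows because $E_s^*,E_u^*$ are invariant lines, the flow acts linearly and orientation-preservingly on each fiber, and, by continuity, the flow cannot carry the sector containing $\V^*$ onto the complementary sector. Over $\Lambda$ one has $U_s=U_u=0$, so $\mc{C}$ degenerates to $\V^*$.

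The non-vanishing statement is where the real content lies, and we expect it to be the main obstacle. Zeros of $\dot\theta$ on the boundary circle correspond to fixed directions of the linearized projective flow. Concretely, $\dot\theta=0$ at slope $\xi_\beta/\xi_\psi = \pm\sqrt{-K}$. These are slopes $-u$ with $u^2 = -K$. We must show $\sqrt{-K}\in[-U_u,-U_s]$, i.e. $U_s\le -\sqrt{-K}$ and $U_u \ge \sqrt{-K}$ with the correct sign conventions, so that these zeros lie inside $\mc{C}$.

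We would get this from the Riccati equation. Suppose instead $U_u(v)^2 < -K(v)$ at a point. Then $\dot U_u = -U_u^2-K>0$ there, which is not immediately contradictory, so a pointwise argument fails. The alternative is to avoid pointwise zeros of $\dot\theta$ altogether and enlarge $\mc{C}$ appropriately. Since $K$ is bounded, set $k:=\max\sqrt{-K}$, and take as the cone the slopes $u\in[\min(U_s,-k),\max(U_u,k)]$, intersected with the forward and backward invariant hull. Outside it $|u|>k$, hence $\dot\theta \ge \sin^2\theta - k^2\cos^2\theta > 0$, which gives non-vanishing. Invariance is then checked by the comparison principle for the Riccati equation: a solution starting above $k \ge U_u$ stays above $U_u$ because $E_u^*$ is invariant and ODE solutions cannot cross it. Working on the boundary circle, a region where $\dot\theta>0$ uniformly in the direction exiting the cone is forward invariant, and one uses a cone bounded by invariant lines on each side. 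We would check carefully which of the two formulations matches the figure, expecting the first (edges exactly $E_s^*,E_u^*$). In that case the key claim is that on $\partial_\infty\Sigma\setminus\mc{C}$ every orbit moves monotonically in $\theta$ from $E_u^*$ to $E_s^*$. This follows because outside $\mc{C}$ any zero of $\dot\theta$ would give an invariant direction, and such a direction would have to be either $E_s^*$ or $E_u^*$ by the characterization of these bundles through bounded growth.
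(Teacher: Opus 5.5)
Your set-up is correct, and for the first half it is more explicit than the paper, which only cites the classical theory. The covector system $\dot\xi_\beta=K\xi_\psi$, $\dot\xi_\psi=-\xi_\beta$ is right. So is the formula $\dot\theta=K\cos^2\theta+\sin^2\theta$, which reduces to \eqref{equation:order2} over $\Lambda$. The identification $E^*_{s,u}=\R(\psi-U_{s,u}\beta)$ and the invariant cone $\mc{C}=\{-U_u\le\tan\theta\le-U_s\}$, containing $\V^*$ and missing $\HH^*$, are also fine.

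\textbf{The gap.} The gap is the non-vanishing assertion, and your closing argument for it is invalid. A zero of the $\partial_\theta$-component at a single point $(v,\theta_0)$ does not produce an invariant direction. The field $\mathbf{X}=X+(K\cos^2\theta+\sin^2\theta)\partial_\theta$ never vanishes. An invariant line field $v\mapsto\theta(v)$ is characterized by $X\theta=K\cos^2\theta+\sin^2\theta$ along orbits, not by pointwise vanishing of the right-hand side. Indeed, at $E_u^*$ itself one gets $\dot\theta=\cos^2\theta_u\,(K+U_u^2)=-\cos^2\theta_u\,XU_u$, which is generically nonzero. (Also, forward orbits outside $\mc{C}$ run from $E_s^*$ to $E_u^*$, not the reverse: $E_u^*$ is the sink for the cotangent lift.)

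\textbf{The needed inequality is false in general.} You correctly reduced the claim to $U_u\ge\sqrt{-K}$ and $-U_s\ge\sqrt{-K}$ everywhere. This is equivalent to $XU_u\le 0$ and $XU_s\le 0$ everywhere. But along a closed geodesic $U_u$ is periodic, so $XU_u=-U_u^2-K$ has zero mean. Hence $XU_u$ is strictly positive somewhere unless $K$ is constant along that geodesic. At such a point, the zero $\tan\theta=-\sqrt{-K}$ of the $\partial_\theta$-component lies strictly outside $\mc{C}$.

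So, with edges exactly $E_s^*,E_u^*$ and the $(\beta,\psi)$-trivialization of \eqref{equation:order2}, the second assertion read literally fails whenever $K$ is non-constant along some closed geodesic. This happens, for example, for any negatively curved metric of non-constant curvature, by density of closed orbits. No argument along your lines can close it.

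\textbf{The fallback cone.} Your enlarged cone does not help either. Classically $|U_{s,u}|\le k:=\max\sqrt{-K}$, so the enlarged cone is just $\{|\tan\theta|\le k\}$. This cone is not flow-invariant. On its edge $\tan\theta=k$ one has $\dot\theta=\cos^2\theta\,(k^2+K)\ge 0$, strictly wherever $K>-k^2$ (e.g. over $\Lambda$), so points exit. The Riccati comparison principle only yields invariance of sectors bounded by invariant lines.

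\textbf{What your formula does prove.} It gives the twist property: $\dot\theta=1$ on $\HH^*$, and $\dot\theta\ge\sin^2\theta-k^2\cos^2\theta>0$ on the conic neighborhood $\{|\tan\theta|>k\}$ of $\HH^*$. This neighborhood lies in $\partial_\infty\Sigma\setminus\mc{C}$. This is all the paper uses later, in Lemma \ref{lemma:escape}(iii) and for the transversality in Lemma \ref{proposition:regularity-fourier-modes}. You should state and prove the lemma in that corrected form: your invariant cone $\mc{C}$, together with non-vanishing of the $\partial_\theta$-component on a fixed conic neighborhood of $\HH^*$.
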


Lemma \ref{lemma:non-tangent} describes the dynamics on $\Sigma$, see Figure \ref{figure:cones} for an illustration. It simply follows from the classical description of the geodesic flow in nonpositive curvature, see \cite{Knieper-02,Barreira-Pesin-13}. Note that, in particular, on $\partial_\infty \Sigma \cap \HH^*$, the $\partial_\theta$-component of $\mathbf{X}$ in \eqref{equation:product} does not vanish and this is independent of the sign of the sectional curvature of $(M,g)$: this is known as the \emph{twist property} of the geodesic flow.

Finally, it will be convient to have in mind an explicit computation for $\mathbf{X}$ on $\partial_\infty \Sigma \cap \pi^{-1}(\Lambda)$ near $\V^*$. Since $\V^*$ is a \emph{trapped set} for the lifted dynamics (that is, for any $(v,\xi) \in T^*\M$ with $\xi \in \V^*$, the $\xi$-component of $\Phi_t^{T^*\M}(v,\xi)$ has constant norm), one expects the $\partial_\theta$-component of $\mathbf{X}$ to vanish at $\V^*$. Indeed, introduce $\theta \in (-\pi/2,\pi/2)$ such that $\tan \theta = \xi_\beta/\xi_\psi$ for $\xi_\psi \gg 1$. Note that $(v,\theta)$ (for $v \in \M$ and $\theta  \in (-\pi/2,\pi/2)$) are well-defined coordinates on $\partial_\infty \Sigma$ near $\V^*_+ := \left\{ \lambda \psi ~:~ \lambda > 0\right\}$.

Then, $\Phi_t(v,\theta) = (\varphi_t(v),\theta_t)$ with
\[
\begin{split}
& \theta_t = \tan^{-1}((\xi_\beta)_t,(\xi_\psi)_t) = \tan^{-1}(\xi_\beta/(\xi_\psi-t\xi_\beta)), \\
&  \partial_t \theta_t|_{t=0} =  \dfrac{(\xi_\beta/\xi_\psi)^2}{1+  (\xi_\beta/\xi_\psi)^2} = \dfrac{\tan^2\theta}{1+\tan^2\theta},
\end{split}
\]
that is, one has over $\Lambda$:
\begin{equation}
\label{equation:order2}
\mathbf{X}(v,\theta) = X(v) + \dfrac{\tan^2\theta}{1+\tan^2\theta} \partial_\theta.
\end{equation}
We see that the vector field vanishes to second order on the covertical space $\V^*$ over $\Lambda$ and this will be crucial for the propagation of singularities argument below.

%

\subsection{Powers of the canonical bundle and Fourier decomposition}

\label{ssection:fourier}

\subsubsection{Fourier decomposition} Any smooth function $f \in C^\infty(\M)$ can be decomposed in Fourier series in the circle fibers of the unit tangent bundle $\M = SM$, that is, one has:
\begin{equation}
\label{equation:decomposition}
u = \sum_{k \in \Z} u_k,
\end{equation}
where $u_k \in C^\infty(\M)$ is globally defined and satisfies $V u_k = ik u_k$. The smooth functions $u_k$ are sections of the line bundle $\Omega_k \to M$ which consists of all smooth functions on $\M$ in $\ker(V-ik)$. Writing $\kappa \to M$ for the canonical line bundle over $M$ given by the complex structure of the surface (induced by the conformal class of the metric $g$), there is a natural identification $\pi_k^* :  \kappa^{\otimes k}\xrightarrow{\sim} \Omega_k$ for $k \in \Z$, given by $\pi_k^*(\omega)(x,v) := \omega_x(v^{\otimes k})$ for $\omega$ a section of $\kappa^{\otimes k}$ and $(x,v) \in SM$. The adjoint operator ${\pi_k}_* : L^2(M, \Omega_k) \to L^2(M,\kappa^{\otimes k})$ extends naturally to a map ${\pi_k}_* : L^2(SM) \to L^2(M,\kappa^{\otimes k})$ which is zero on all Fourier modes $\neq k$.

The splitting \eqref{equation:decomposition} also holds for $f \in L^2(\M)$ and the sum is orthogonal with respect to the $L^2$ scalar product on $\M$, that is
\[
\|u\|^2_{L^2(\M)} = \sum_{k \in \Z} \|u_k\|_{L^2(\M)}.
\]
More generally, any distribution $u \in \mc{D}'(SM)$ admits a decomposition \eqref{equation:decomposition} by setting for $\varphi \in C^\infty(\M)$: $\langle u_k, \varphi \rangle := \langle u, \varphi_k \rangle$.

\subsubsection{Raising and lowering operators}
The lowering/raising operators $\eta_\pm$ act on \eqref{equation:decomposition} as follows:
\[
\eta_\pm : C^\infty(M,\Omega_k) \to C^\infty(M,\Omega_{k \pm 1}).
\]
The line bundles $\kappa^{\otimes k} \to M$ are holomorphic line bundles equipped with $\partial$ and $\overline{\partial}$ operators. Under the identification $\Omega_k \simeq \kappa^{\otimes k}$, $\eta_+$ gets identified with $\partial$ for $k \leq 0$ and $\eta_-$ with $\overline{\partial}$ for $k \geq 0$, see \cite[Lemma 15.2.5]{Lefeuvre-book}.
Hence, these operators are elliptic and the dimension of their kernel can be explicitly computed, see \cite[Lemma 15.2.6]{Lefeuvre-book} for instance. For $k \geq 1$ (resp. $k \leq -1$), $\eta_+ : C^\infty(M,\Omega_k) \to C^\infty(M,\Omega_{k+1})$ (resp. $\eta_- : C^\infty(M,\Omega_{k}) \to C^\infty(M,\Omega_{k-1})$) is injective. Finally, the following \emph{Pestov identity} holds (see \cite[Lemma 15.2.4]{Lefeuvre-book}):
\begin{equation}
\label{equation:pestov}
\|\eta_+ u\|^2_{L^2(SM)} = \|\eta_-u\|^2_{L^2(SM)} - \dfrac{k}{2} \langle K u, u \rangle_{L^2(SM)}, \qquad \forall u \in C^\infty(M,\Omega_k), k \in \Z.
\end{equation}

\subsubsection{Geometric anisotropic Sobolev spaces} \label{ssection:spaces}

The \emph{geometric anisotropic Sobolev spaces} are defined as follows. For $s, t \in \R$, and $u \in C^\infty(SM)$, let
\[
\begin{split}
\|u\|^2_{H^s_xH^{t}_\theta(SM)} &:= \|(1-X^2-H^2)^{s/2}(1-V^2)^{t/2}u\|^2_{L^2(SM)} \\
&= \sum_{k \in \Z} (1+k^2)^{t}\|(1-X^2-H^2)u_k\|^2_{L^2(SM)}.
\end{split}
\]
The space $H^s_xH^{t}_\theta(SM)$ is defined as the completion of $C^\infty(SM)$ with respect to the previous norm. When $t=0$ (resp. $s=0$), we simply write $H^s_x(SM)$ (resp. $H^t_\theta(SM)$).

\section{Semiclassical analysis and radial estimates}

\label{section:analysis}

\subsection{Semiclassical analysis}

\label{ssection:analysis}

In the following two subsections, $\M$ is a closed connected ($n$-dimensional) manifold. We recall some standard elements of semiclassical analysis and refer the reader to \cite{Zworski-12} for a detailed account.

\subsubsection{Exotic symbols and pseudodifferential operators}

Let $\delta \in [0,1/2)$, $k \in \R$. We introduce the \emph{exotic} class of symbols $S^{k}_{\delta}(T^*\M)$ as the set of all smooth functions $a \in C^\infty(T^*\M \times (0,1]_h))$ satisfying the following estimates in every coordinate chart $U \subset \R^n$: for all $\alpha,\beta \in \N^n$, there exists a constant $C := C(\alpha,\beta,U) > 0$ such that
\begin{equation}
\label{equation:exotic}
\forall (x,\xi) \in T^*U, \forall h \in (0,1], \qquad |\partial_\xi^\alpha \partial_x^\beta a_h(x,\xi)| \leq C h^{-\delta(|\alpha|+|\beta|)} \langle \xi \rangle^{k-|\alpha|}.
\end{equation}
Although this will not be reflected in the notation later, symbols in $S^k_{\delta}(T^*\M)$ \emph{do} depend on the variable $h$, with the requirement that \eqref{equation:exotic} be satisfied. One can check that \eqref{equation:exotic} are equivariant by the action of $\mathrm{Diff}(\R^n)$ so it allows to define the classes $S^{k}_\delta(T^*\M)$ intrinsically on closed manifolds. When $\delta=0$, we will write $S^k(T^*\M)$.

We consider an arbitrary semiclassical quantization procedure $\Op_h$ on $\M$. In a local patch of coordinates $U \subset \R^n$, $\Op_h$ is given by
\begin{equation}
\label{equation:sc}
\Op_h(a)f (x) := \dfrac{1}{(2\pi h)^n} \int_{\R^n_\xi} \int_{\R^n_y} e^{\tfrac{i}{h} \xi \cdot (x-y)} a(x,\xi) f(y) |dy| |d\xi|,
\end{equation}
where $f \in C^\infty_{\mathrm{comp}}(U)$ and $a \in \mc{S}(\R^{2n})$ is Schwartz. To define $\Op_h$ on $\M$, take a cover of $\M$ by small open subsets and define the quantization using a partition of unity by \eqref{equation:sc} on each open subset, see \cite[Section 14.2]{Zworski-12} for further details.

More generally, one can quantize symbols in the class $S^{k}_\delta(T^*\M)$ for all $k \in \R$. We define:
\[
\Psi^{k}_{h,\delta}(\M) := \left\{ \Op_h(a) + \mc{O}_{\Psi^{-\infty}_h}(h^\infty) ~:~ a \in S^k_\delta(T^*\M) \right\}
\]
to be the set of all semiclassical exotic pseudodifferential operators of order $k \in \R$ (with parameter $\delta \in [0,1/2)$). As usual, $\Psi^\star_{h,\delta}(\M) := \cup_{k \in \R} \Psi^k_{h,\delta}(\M)$ forms a graded algebra. Moreover, there is a well-defined \emph{principal symbol} map
\[
\sigma : \Psi^k_{h,\delta}(T^*\M) \to S^k_\delta(T^*\M)/(h^{1-2\delta}S^{k-1}_\delta(T^*M)), 
\]
such that
\[
0 \to h^{1-\delta}\Psi^{k-1}_{h,\delta}(\M) \to \Psi^k_{h,\delta}(M) \overset{\sigma}{\to} S^k_\delta(T^*\M)/(h^{1-2\delta}S^{k-1}_\delta(T^*M)) \to 0
\]
is a long exact sequence. When $\delta = 0$, we will simply write $\Psi^k_h(\M)$ (standard $h$-pseudodifferential operators), and when $\delta=0, h =1$, we will write $\Psi^k(\M)$ ($h$-independent pseudodifferential operators).

We sum up the main properties of exotic operators:

\begin{enumerate}
\item \textbf{Composition of pseudodifferential operators:} given two symbols $a \in S^k_{\delta}(T^*\M)$, and $b \in S^{k'}_\delta(T^*\M)$, one has 
\begin{equation}
\label{equation:composition}
\Op_h(a) \Op_h(b) = \Op_h(ab) + \mc{O}_{\Psi^{k+k'-1}_{h,\delta}}(h^{1-2\delta}).
\end{equation}
\item \textbf{$L^2$-boundedness:} given $a \in S^0_\delta(T^*\M)$,
\[
\Op_h(a) : L^2(\M) \to L^2(\M)
\]
is bounded with norm given by
\begin{equation}
\label{equation:l2-bound}
\|\Op_h(a)\|_{L^2 \to L^2} \leq \|a\|_{L^\infty(T^*\M)} + \mc{O}(h^{1-2\delta}).
\end{equation}
\item \textbf{Egorov's Theorem:} given $a \in S^k_\delta(T^*\M)$ and $\psi \in \mathrm{Diff}(M)$, one has
\[
\psi^* \Op_h(a) (\psi^{-1})^* \in \Psi^k_{h,\delta}(\M),
\]
and
\begin{equation}
\label{equation:egorov}
\psi^* \Op_h(a) (\psi^{-1})^* = \Op_h(\Psi^* a) + \mc{O}_{\Psi^{k-1}_{h,\delta}(\M)}(h^{1-2\delta}),
\end{equation}
where $\Psi : T^*\M \to T^*\M$, defined by $\Psi(x,\xi) := (\psi(x),d\psi^{-\top}(x)\xi)$, is the symplectic lift of $\psi$ to $T^*\M$.
\end{enumerate}

By taking $h = 1, \delta = 0$ in the previous discussion, one retrieves the usual algebra of $h$-independent pseudodifferential operators on $\M$.

\subsubsection{Wavefront set}

\label{sssection:wavefront}

We now consider the case where $\delta=0,h=1$. The \emph{wavefront set} $\WF(A)$ (or the \emph{microsupport}) of an operator $A \in \Psi^{k}(\M)$ is the (closed) conic subset of $T^*\M \setminus \left\{0\right\}$ whose complement is characterized as follows: $(x_0,\xi_0) \notin \WF(A)$ if and only if for all $m \in \R$, for all $b \in S^m(T^*\M)$ supported in a small conic neighborhood of $(x_0,\xi_0)$, one has $A\Op(b) \in \Psi^{-\infty}(\M)$. In other words, the complement of the wavefront set of $A$ is the set of codirections where $A$ behaves as a smoothing operator.

The wavefront set $\WF(u)$ of a distribution $u \in \mc{D}'(SM)$ is the (closed) conic subset of $T^*\M \setminus \left\{0\right\}$ whose complement is characterized as follows: $(x_0,\xi_0) \notin \WF(u)$ if and only if there exists a small open conic neighborhood $V$ of $(x_0,\xi_0)$ such that for all $k \in \R$, for all $A \in \Psi^k(\M)$ with wavefront set contained in $V$, one has $A u \in C^\infty(\M)$. 
 
 The elliptic set $\mathrm{ell}(A) \subset T^*\M \setminus \left\{0\right\}$ of an operator $A \in \Psi^k(T^*\M)$ is defined as the (open) conic set of points $(x_0,\xi_0) \in T^*\M \setminus \left\{ 0\right\}$ such that there exists a constant $C > 0$ such that the following holds:
\begin{equation}
\label{equation:elliptic}
\begin{split}
|\xi| \geq C, d_{S^*\M}\left((x,\xi/|\xi|), (x_0,\xi_0/|\xi_0|) \right) < 1/C \qquad \implies |\sigma_A(x,\xi)| \geq \langle \xi\rangle^k/C.
\end{split}
\end{equation}
Here $d_{S^*\M}$ is any metric on the cosphere bundle $S^*\M := \R_+\backslash T^*\M$ and the $\R_+$-action is given by (radial) dilation in the fibers of $T^*\M$. An operator is said to be \emph{elliptic} if $\mathrm{ell}(A) = T^*\M \setminus \left\{0\right\}$. The \emph{characteristic set} $\Sigma(A)$ is then defined as
\[
\Sigma(A) := T^*\M \setminus \mathrm{ell}(A).
\]
The following lemma is classical and will play an important role in what follows:

\begin{lemma}[Elliptic regularity]
Let $A \in \Psi^k(\M)$ and $u \in \mc{D}'(\M)$ such that $Au \in C^\infty(\M)$. Then $\WF(u) \subset \Sigma(A)$.
\end{lemma}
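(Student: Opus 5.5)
The plan is to construct a microlocal parametrix for $A$ near each point of its characteristic set's complement, and then propagate regularity through it. Fix $(x_0,\xi_0) \notin \Sigma(A)$. By the definition \eqref{equation:elliptic} of $\mathrm{ell}(A)$, $|\sigma_A(x,\xi)| \geq \langle \xi \rangle^k/C$ on a conic neighborhood $V$ of $(x_0,\xi_0)$ intersected with $\{|\xi| \geq C\}$.

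First, choose a cutoff symbol $\chi \in S^0(T^*\M)$, homogeneous of degree $0$ for large $|\xi|$, equal to $1$ on a smaller conic neighborhood $V' \Subset V$ of $(x_0,\xi_0)$ (for $|\xi|$ large) and supported in $V \cap \{|\xi| \geq C\}$. Set $b_0 := \chi/\sigma_A \in S^{-k}(T^*\M)$; this is a genuine symbol thanks to the lower bound on $\sigma_A$ over the support of $\chi$. By the composition formula \eqref{equation:composition} with $h=1$, $\delta=0$, we get
\[
\Op(b_0)A = \Op(\chi) + R_1, \qquad R_1 \in \Psi^{-1}(\M),
\]
and $\WF(R_1) \subset \supp \chi$.

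Next, iterate in the standard Neumann/Borel fashion. Define $b_j \in S^{-k-j}$ inductively so that the error is pushed to $\Psi^{-N}$ for every $N$. Asymptotically summing gives $B \in \Psi^{-k}(\M)$ with
\[
BA = \Op(\chi) + R, \qquad R \in \Psi^{-\infty}(\M).
\]
Concretely, if $\Op(b_0)A = \Op(\chi) + \Op(r_1)$ with $r_1 \in S^{-1}$ supported essentially in $\supp\chi$, take $b_1 := -\tilde\chi\, r_1/\sigma_A$ with $\tilde\chi$ a slightly larger cutoff. Some care is needed so that the errors are microlocalized where $\sigma_A$ is elliptic: the errors are genuinely $\Op(\chi)$ modulo terms whose full symbols are supported in $\supp \chi$, up to $\Psi^{-\infty}$ (off-support terms of compositions are residual).

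Then apply this to $u$. We have $\Op(\chi)u = BAu - Ru$. Here $BAu \in C^\infty$ because $Au \in C^\infty$ and $B$ maps $C^\infty$ to itself, and $Ru \in C^\infty$ since $R$ is smoothing. Hence $\Op(\chi)u \in C^\infty(\M)$. For any $A' \in \Psi^m(\M)$ with $\WF(A') \subset V'$, we have $\Op(1-\chi)$ microsupported away from $V'$, so $A'\Op(1-\chi) \in \Psi^{-\infty}$. Therefore $A'u = A'\Op(\chi)u + A'\Op(1-\chi)u \in C^\infty$. Thus $(x_0,\xi_0)\notin \WF(u)$, and since the point was arbitrary in $\mathrm{ell}(A)$, $\WF(u) \subset \Sigma(A)$.

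The only delicate step is the bookkeeping in the iterative parametrix: one must ensure each correction symbol is supported where $\sigma_A$ is elliptic, and that the remainders are residual off that region. This is handled by nested cutoffs $\chi \prec \tilde\chi \prec \dots$ and the disjoint-support property of compositions.
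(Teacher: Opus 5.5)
Your proposal is correct. It is the standard microlocal parametrix argument, which is what the paper has in mind when it states this lemma as classical; the paper gives no proof of its own. The support bookkeeping you flag is handled by the full asymptotic composition expansion, equivalently $\WF(PQ)\subset\WF(P)\cap\WF(Q)$, which is slightly more than the first-order rule \eqref{equation:composition} listed in the paper. Also, a single fixed cutoff $\tilde\chi$, equal to $1$ on $\supp\chi$ and supported in the elliptic region, already works at every step of the iteration, so you do not need an infinite nested sequence of cutoffs.
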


In particular, taking $A := X \in \Psi^1(\M)$, a vector field, its principal symbol is $\sigma_X(x,\xi) = i\langle \xi, X(x) \rangle$ and thus its characteristic set is given by $\Sigma := \left\{\langle\xi,X(x)\rangle=0\right\}$. It implies that any distribution $u \in \mc{D}'(\M)$ satisfying $Xu = 0$ has wavefront set in $\Sigma$.

\subsubsection{Sobolev spaces}

\label{sssection:sobolev}

Semiclassical Sobolev spaces $H^s_h(\M)$ for $s \in \R$ are constructed as follows. Consider an arbitrary metric $g_{\M}$; let $\Delta_{g_{\M}} \geq 0$ be the associated Laplacian, and consider the completion of $C^\infty(\M)$ with respect to the norm
\begin{equation}
\label{equation:sobolev}
\|u\|_{H^s_h(\M)} := \|(\mathbbm{1}+h^2\Delta_{g_{\M}})^{s/2} u\|_{L^2(\M)},
\end{equation}
where $(\mathbbm{1}+h^2\Delta_{g_{\M}})^{s/2}$ is defined by the spectral Theorem (for unbounded selfadjoint operators) and $(\mathbbm{1}+h^2\Delta_{g_{\M}})^{s/2} \in \Psi^s_h(\M)$. For different values of $h,h'$ the Sobolev spaces $H^s_{h}(\M)$ and $H^{s}_{h'}(\M)$ are the same with equivalent norms. In particular, when $h=1$, we will simply write $H^s(\M)$. The following terminology will be used several times:

\begin{definition}
Let $Y \subset T^*M \setminus 0$ be a closed open subset. A distribution $u \in \mc{D}'(SM)$ is \emph{microlocally $H^s$ on $A$} if the following holds: for all $(x_0,\xi_0) \in Y$, there exists a small open conic neighborhood $V$ of $(x_0,\xi_0)$ such that for all $A \in \Psi^0(\M)$ with $\WF(A) \subset V$, one has $A u \in H^s(\M)$.
\end{definition}

It will be convenient to have at our disposal a Paley-Littlewood characterization of Sobolev spaces. Fix $R > 0$ and consider a nonnegative cutoff function $\chi_0 \in C^\infty_{\comp}(T^*\M)$ such that $\chi_0 \equiv 1$ for $\left\{|\xi| \leq R\right\}$ and $\chi_0 \equiv 0$ for $\left\{|\xi\ \geq 2R\right\}$. Define for $j \geq 1$:
\[
\chi_j(x,\xi) := \chi_0(x,2^{-j}\xi) - \chi_0(x,2^{-(j-1)}\xi).
\]
This function has support in $\left\{2^{j-1}R \leq 2^{j+1}R\right\}$ and is equal to $1$ on $|\xi|=2^j R$. Moreover, we have by construction $\mathbf{1} = \sum_j \chi_j$ on $T^*\M$. We then set for $u \in C^\infty(\M)$:
\begin{equation}
\label{equation:sob-besov}
\|u\|_{H^s_{\mathrm{aux}}}^2 := \sum_{j=0}^{+\infty} 4^{js} \|\Op(\chi_j)u\|_{L^2}^2
\end{equation}
It is a standard fact that \eqref{equation:sobolev} (with $h=1$) and \eqref{equation:sob-besov} define the same Sobolev spaces with equivalent norms. The choice of the constant $R > 0$ does not affect the space (it only changes the norm to another equivalent norm). Hence, in the following, we will simply use the notation $H^s(\M)$ equipped with one of these two norms. In the Paley-Littlewood decomposition \eqref{equation:sob-besov}, the exponent $2^{-j}$ should be interpreted as the semiclassical parameter $h$. In particular,
\begin{equation}
\label{equation:chi}
\chi_j(x,\xi) = \chi_1(x,2^{-(j-1)}\xi) =: \chi(x,2^{-j}\xi),
\end{equation}
where $\chi(x,\xi) = \chi_1(x,2\xi)$ (the function $\chi$ has support in $\left\{R/2 \leq |\xi| \leq 2R\right\}$ and is equal to $1$ on $\left\{|\xi|=R\right\}$). Thus
\begin{equation}
\label{equation:sob-besov2}
\|u\|_{H^s_{\mathrm{aux}}}^2 =  \|\Op(\chi_0)u\|_{L^2}^2 + \sum_{j=1}^{+\infty} 4^{js}\|\Op_{2^{-j}}(\chi)u\|^2_{L^2},
\end{equation}
where $\Op_{2^{-j}}$ stands for the semiclassical quantization with parameter $h=2^{-j}$.

We will need the following characterization of Sobolev regularity:

\begin{lemma}
\label{lemma:sobolev}
Let $u \in \mc{D}'(\M)$ be an \emph{$h$-independent} distribution on $\M$. Let $(x_0,\xi_0) \in T^*\M \setminus \left\{0\right\}$ and $c \in S^0(T^*M)$ be a nonnegative symbol such that $c = 1$ in a conic neighborhood of $(x_0,\xi_0)$ and $c=0$ outside the complement of a slightly larger conic neighborhood. Assume that $\|\Op_h(c) u\|_{L^2}^2 = \mc{O}(h^s)$. Then $u$ is microlocally $H^{s/2-}$ near $(x_0,\xi_0)$, that is, for all $\eps > 0$, for all $A \in \Psi^0(\M)$ with wavefront set near $(x_0,\xi_0)$, $A u \in H^{s/2-\eps}(\M)$.
\end{lemma}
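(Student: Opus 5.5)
The plan is to reduce the statement to the dyadic (Paley–Littlewood) characterization \eqref{equation:sob-besov2}, after first localizing $u$ microlocally near $(x_0,\xi_0)$. Choose $c' \in S^0(T^*\M)$, homogeneous of degree $0$ for $|\xi|\ge 1$, with $c'=1$ on a small conic neighborhood $V'$ of $(x_0,\xi_0)$ and $\supp c'$ contained in the conic region where $c=1$. For $A\in\Psi^0(\M)$ with $\WF(A)\subset V'$, we have $A = A\Op(c') + R$ with $R\in\Psi^{-\infty}$. Since $u$ is a distribution, $u \in H^{-N}$ for some $N$, so $Ru\in C^\infty$. It therefore suffices to show $\Op(c')u \in H^{s/2-\eps}$, because $A$ is bounded on every Sobolev space.

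Next, expand the $H^{s/2-\eps}$ norm of $w:=\Op(c')u$ using \eqref{equation:sob-besov2}. We need to bound
\[
\sum_{j\ge1} 4^{j(s/2-\eps)}\|\Op_{2^{-j}}(\chi)\Op(c')u\|_{L^2}^2 .
\]
The key observation is that $\Op(c')$ is an $h$-independent operator, and in semiclassical form it equals $\Op_h(c'_h)$ with $c'_h(x,\xi)=c'(x,\xi/h)$. On the support of $\chi$, where $|\xi|\sim 1$, this symbol is just $c'(x,\xi)$ by homogeneity, since $c'$ is $0$-homogeneous. Moreover $\chi\,c'$ is supported where $c=1$ (the conic property is scale invariant), so $\chi c' = \chi c' c$. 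The composition formula \eqref{equation:composition} with $\delta=0$ then gives
\[
\Op_h(\chi)\Op(c') = \Op_h(\chi)\Op(c')\Op_h(c) + \mc{O}_{\Psi^{-\infty}_h}(h^\infty) + (\text{terms microsupported away from } |\xi|\sim 1),
\]
and the remainder terms contribute $\mc{O}(h^\infty)\|u\|_{H^{-N}}$. Using the $L^2$-bound \eqref{equation:l2-bound} on $\Op_h(\chi)\Op(c')$, we get $\|\Op_{2^{-j}}(\chi)w\|_{L^2}^2 \le C\|\Op_{2^{-j}}(c)u\|^2_{L^2} + \mc{O}(2^{-j\infty}) = \mc{O}(2^{-js})$ by hypothesis. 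The dyadic sum is then $\sum_j 4^{j(s/2-\eps)}2^{-js} = \sum_j 2^{-2j\eps}<\infty$, so $w\in H^{s/2-\eps}$. The low-frequency term $\Op(\chi_0)w$ is trivially in $L^2$ up to smoothing.

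The main obstacle is the middle step: making rigorous that $\Op(c')$ and $\Op_h(\chi)$ compose within the semiclassical calculus with a remainder that is $\mc{O}(h^\infty)$ uniformly, when acting on a merely distributional $u$. The point requiring care is that $c'$, viewed as a semiclassical symbol, is not in $S^0$ uniformly near $\xi=0$. I would handle this by inserting a cutoff $\tilde\chi$ with $\tilde\chi=1$ on $\supp\chi$ and supported in $\{R/4\le|\xi|\le 4R\}$. Then $\Op_h(\chi)=\Op_h(\chi)\Op_h(\tilde\chi)+\mc{O}_{\Psi^{-\infty}_h}(h^\infty)$, and on $\supp\tilde\chi$ the rescaled $c'$ is a genuine $h$-independent $S^0$ symbol, so the standard calculus applies. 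The $\mc{O}(h^\infty)$ losses in $H^{-N}\to L^2$ norm are absorbed by weights $h^{-N}$, which is harmless against $h^\infty$.
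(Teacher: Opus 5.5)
Your proposal is correct and follows essentially the paper's route: you use the Paley--Littlewood norm \eqref{equation:sob-besov2} with $h=2^{-j}$, and since $c\equiv 1$ on $\supp\chi$ intersected with the microsupport, you write $\Op_h(\chi)A$ (in your case $\Op_h(\chi)\Op(c')$) as a uniformly $L^2$-bounded operator applied to $\Op_h(c)u$, modulo $\mc{O}(h^\infty)$, and then sum $\sum_j 4^{j(s/2-\eps)}2^{-js}<\infty$. The differences are cosmetic: you reduce $A$ to $\Op(c')$ and take $B_h=\Op_h(\chi)\Op(c')$ directly instead of a parametrix, your handling of the $h$-independent-versus-semiclassical composition plays the role of the paper's footnote, and, as in the paper, $R$ should be chosen large so that $c\equiv 1$ on $\supp\chi\cap\supp c'$.
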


The notation $u \in H^{s-}(\M)$ is a shorthand for $u \in H^{s-\eta}(\M)$ for all $\eta > 0$. 

\begin{proof}
We use the Sobolev norm \eqref{equation:sob-besov2} given in terms of the Paley-Littlewood decomposition. In the definition of the Sobolev norm \eqref{equation:sob-besov2}, pick $R > 0$ large enough so that $c \equiv 1$ in $\left\{|\xi| \geq R\right\}$ in a conic neighborhood of $(x_0,\xi_0)$. Take $A \in \Psi^0(\M)$ with wavefront set near $(x_0,\xi_0)$. Since $c \equiv 1$ on the intersection of the support of $\chi$ (defined by \eqref{equation:chi}) with the wavefront set of $A$, we can apply a parametrix construction\footnote{Note that $A$ is not $h$-pseudodifferential (it varies at wavelength $\sim 1$) but $\Op_h(\chi)$ is $h$-pseudodifferential. Nevertheless, it is not difficult to check that $\Op_h(\chi)A = \Op_h(\widetilde{\chi}) + \mc{O}_{\Psi^{-\infty}_h}(h^\infty)$ for some symbol $\widetilde{\chi}$ supported on the support of $\chi$ (see \cite[Lemma 2.1]{Bonthonneau-Lefeuvre-21} for instance). Thus the parametrix construction applies as we claim.}and find $B_h \in \Psi^0_h(\M)$ such that
\[
B_h \Op_h(c) = \Op_h(\chi) A + \mc{O}_{\Psi^{-\infty}_h}(h^\infty).
\]
Hence:
\[
\begin{split}
\|\Op_h(\chi)Au\|_{L^2}^2  = \|B_h \Op_h(c) u\|^2_{L^2} + \mc{O}(h^\infty) \leq C \|\Op_h(c)\|^2_{L^2}+ \mc{O}(h^\infty) \leq C h^s.
 \end{split}
\]
Hence, taking $h = 2^{-j}$ and applying \eqref{equation:sob-besov2}, we find that $A u \in H^{s/2-\eps}$ for any $\eps > 0$.
\end{proof}

%
%
%

\subsection{FBI transform}

We will need basic properties of the Fourier-Bros-Iagolnitzer (FBI) transform (or Bargmann transform). We refer to \cite{Wunsch-Zworski-01, Bonthonneau-Jezequel-20} for a more extensive treatment.

A generic point on $\M$ will be denoted by $x$, while a generic point on $T^*\M$ will be denoted by $\alpha = (\alpha_x,\alpha_\xi)$. The manifold $\M$ is endowed with an arbitrary metric $g$, hence a Riemannian volume density $|dx|$ while $T^*\M$ is endowed with a canonical volume form induced by its symplectic structure $|d\alpha_x d\alpha_\xi|$. Introduce the phase
\begin{equation}
\label{equation:phase}
\phi(\alpha,x) = -\alpha_\xi(\exp^{-1}_{\alpha_x}(x)) + \dfrac{i}{2} \langle \alpha_\xi \rangle d(\alpha_x,x)^2,
\end{equation}
(well-defined for $x$ close to $\alpha_x$), where $\exp$ is the exponential map given by ${g_{\M}}$.

We introduce the $h$-dependent class of symbols $S^{m,k}(T^*\M \times \M)$ satisfying the following bounds: in every coordinate chart $U \subset \R^n$, for all $\alpha,\beta, \gamma \in \N^n$, we require that there exists a constant $C := C(\alpha,\beta,\gamma,U) > 0$ such that:
\begin{equation}
\label{equation:symbols}
\begin{split}
\forall (\alpha_x,\alpha_\xi,x) \in T^*U \times U, \forall &h \in (0,1], \quad |\partial_{\alpha_\xi}^\alpha \partial_{\alpha_x}^\beta \partial_x^{\gamma} a_h(\alpha_x,\alpha_\xi,x)| \leq C h^{-m} \langle \alpha_\xi \rangle^{k-|\alpha|}.
\end{split}
\end{equation}
We will use the following characterization of the FBI transform:

\begin{theorem}
\label{theorem:fbi}
There exists an operator
\[
\mc{T}_h : L^2(\M) \to L^2(T^*\M) \cap C^\infty(T^*\M),
\]
such that for any quantization $\Op_h$ on $\M$, for all $p \in S^{k}_\delta(T^*\M)$, one has
\begin{equation}
\label{equation:link}
\Op_h(p)-\mc{T}_h^*p\mc{T}_h \in h^{1-2\delta} \Psi^{k-1}_{h,\delta}(\M),
\end{equation}
and:
\begin{equation}
\label{equation:isometry}
\mc{T}_h^*\mc{T}_h = \mathbf{1}.
\end{equation}
\end{theorem}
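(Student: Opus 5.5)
We construct $\mc{T}_h$ explicitly as an FBI transform with the phase $\phi$ of \eqref{equation:phase}, and then correct it so that it becomes an exact isometry. Fix a cutoff $\rho \in C^\infty(\M\times\M)$ supported near the diagonal, so that $\phi$ is well defined on its support, and equal to $1$ closer to it. Set
\[
\widetilde{\mc{T}}_h f(\alpha) := \int_{\M} e^{\tfrac{i}{h}\phi(\alpha,x)} a_h(\alpha,x)\rho(\alpha_x,x) f(x)\,|dx|,
\]
with an amplitude $a_h \in S^{3n/4,n/4}(T^*\M\times\M)$ whose leading part is $c_n h^{-3n/4}\langle\alpha_\xi\rangle^{n/4}$. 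These are the standard Gaussian normalizations, chosen so that $\|\widetilde{\mc{T}}_h\|$ is of order one. The kernel is smooth in $\alpha$ and Gaussian localized at scale $(h/\langle\alpha_\xi\rangle)^{1/2}$ around $x=\alpha_x$, so $\widetilde{\mc{T}}_h$ maps $L^2(\M)$ into $C^\infty(T^*\M)$.

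The first step is to compute $\widetilde{\mc{T}}_h^* p\, \widetilde{\mc{T}}_h$ for $p \in S^k_\delta$. Its kernel is
\[
K(x,y) = \int_{T^*\M} e^{\tfrac{i}{h}(\phi(\alpha,x) - \overline{\phi(\alpha,y)})}\,\overline{a}\,a\,p\,\rho\rho\; d\alpha .
\]
We apply complex stationary phase in the $n$ variables $\alpha_x$, with $\alpha_\xi$ treated as the frequency variable. The phase $\phi(\alpha,x)-\overline{\phi(\alpha,y)}$ has nonnegative imaginary part, which vanishes only when $\alpha_x = x = y$. Its critical point lies near $\alpha_x = (x+y)/2$, where it reduces to $\alpha_\xi\cdot(x-y)$ plus terms that are $\mathcal{O}(|x-y|^2)$, and these can be absorbed by a Kuranishi change of variables $\alpha_\xi \mapsto \zeta(x,y,\alpha_\xi)$. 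The result is that $K$ is the kernel of an oscillatory integral $h^{-n}\int e^{\frac{i}{h}\zeta\cdot(x-y)} b(x,y,\zeta)\,d\zeta$, with $b = |c_n|^2(\ldots)\,p + h^{1-2\delta}S^{k-1}_\delta$. Each derivative falling on $p$ costs at most $h^{-\delta}$, while each step of the stationary-phase expansion gains $h$ times a power of $\langle\alpha_\xi\rangle^{-1}$ coming from the Gaussian width. This is why exotic symbols with $\delta<1/2$ survive, with an error of size $h^{1-2\delta}$. Choosing $c_n$ so that the leading factor equals $1$, we obtain
\[
\widetilde{\mc{T}}_h^*p\,\widetilde{\mc{T}}_h = \Op_h(p) + h^{1-2\delta}\Psi^{k-1}_{h,\delta}.
\]
The right-hand side does not depend on which quantization $\Op_h$ is used, because two quantizations differ by $h\Psi^{k-1}_h$ on standard symbols and by $h^{1-2\delta}\Psi^{k-1}_{h,\delta}$ on exotic ones. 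Off the diagonal the kernel is $\mathcal{O}(h^\infty)$, by the positivity of the imaginary part of the phase.

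The second step is to make the transform an exact isometry. Taking $p=1$ in the first step gives $\widetilde{\mc{T}}_h^*\widetilde{\mc{T}}_h = \mathbf{1} + hR_h$ with $R_h \in \Psi^{-1}_h$. For $h$ small, $\mathbf{1}+hR_h$ is selfadjoint and positive, and by functional calculus (Helffer–Sjöstrand, or simply a Neumann series) $(\mathbf{1}+hR_h)^{-1/2} = \mathbf{1} + hR'_h$ with $R'_h \in \Psi^{-1}_h$. We then define
\[
\mc{T}_h := \widetilde{\mc{T}}_h(\mathbf{1}+hR_h)^{-1/2},
\]
which satisfies \eqref{equation:isometry} exactly. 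This correction does not spoil \eqref{equation:link}. Indeed,
\[
\mc{T}_h^*p\,\mc{T}_h = (\mathbf{1}+hR'_h)\bigl(\Op_h(p)+h^{1-2\delta}\Psi^{k-1}_{h,\delta}\bigr)(\mathbf{1}+hR'_h),
\]
and the cross terms such as $hR'_h\Op_h(p)$ lie in $h\Psi^{k-1}_{h,\delta} \subset h^{1-2\delta}\Psi^{k-1}_{h,\delta}$. The smoothness of $\mc{T}_hf$ is preserved, since $(\mathbf{1}+hR_h)^{-1/2}$ is bounded on $L^2$.

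The main obstacle will be the first step: carrying out the stationary-phase computation uniformly in $\alpha_\xi$, including large $|\alpha_\xi|$, and for exotic symbols. The route we would try first is to rescale $\alpha_x - x = (h/\langle\alpha_\xi\rangle)^{1/2}w$. After this rescaling the Gaussian becomes $h$-independent, and derivatives of $p$ in $\alpha_x$ produce factors $h^{1/2-\delta}\langle\alpha_\xi\rangle^{-1/2}$, which are small. Expanding $p$ in Taylor series to second order in $w$, the first-order term integrates to zero by parity. The second-order term then has size $h^{1-2\delta}\langle\alpha_\xi\rangle^{-1}$, which is exactly the admissible error. Alternatively, we could quote this construction from \cite{Wunsch-Zworski-01, Bonthonneau-Jezequel-20}, where it is carried out in detail.
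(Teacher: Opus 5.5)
Your proposal is correct. It is in substance the Wunsch--Zworski construction that the paper only cites: the paper gives no argument beyond that reference and the assertion that the case $\delta<1/2$ follows by adapting it. Your rescaling $\alpha_x-x=(h/\langle\alpha_\xi\rangle)^{1/2}w$, with the parity cancellation, supplies exactly the reason why exotic symbols cost only $h^{1-2\delta}\langle\xi\rangle^{-1}$.

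The genuine difference is the normalization step. Your $\mc{T}_h=\widetilde{\mc{T}}_h(\mathbf{1}+hR_h)^{-1/2}=\widetilde{\mc{T}}_h+h\widetilde{\mc{T}}_hR_h'$ satisfies \eqref{equation:link} and \eqref{equation:isometry}, which is all the statement asks. However, it differs from the Gaussian integral by an $\mc{O}(h)$ term. The paper's accompanying description \eqref{equation:fbi} instead has a remainder of norm $\mc{O}(h^\infty)$ with values in $\langle\alpha_\xi\rangle^{-\infty}L^2$. To obtain that, first improve the amplitude. Replacing $a$ by $a(1+h^jb_j(\alpha))$ with $b_j$ real changes $\widetilde{\mc{T}}_h^*\widetilde{\mc{T}}_h$ by $h^j\Op_h(2b_j)+\mc{O}(h^{j+1})$. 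After a Borel summation, $\widetilde{\mc{T}}_h^*\widetilde{\mc{T}}_h-\mathbf{1}\in h^\infty\Psi^{-\infty}_h$, and the square-root correction becomes negligible.

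Your route is shorter and suffices for the theorem. The refined one preserves the kernel structure that Proposition \ref{proposition:estimate} exploits, where Gaussian decay gives $\|(1-p)\mc{T}_hu\|_{L^2}=\mc{O}(e^{-1/(Ch^{\eta'})}\|u\|_{L^2})$ for $\supp u\subset\Lambda$. With your $\mc{T}_h$, $R_h'u$ is no longer supported in $\Lambda$, so you only get $\mc{O}(e^{-1/(Ch^{\eta'})}+h)\|u\|_{L^2}$. This is harmless there, since $h\ll h^{1/4-\eta}$, but it is weaker than what the paper asserts.

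There is one inaccuracy in your first step. The imaginary part of the critical value, approximately $\frac14\langle\alpha_\xi\rangle d(x,y)^2$, cannot be absorbed by a real Kuranishi change of variables. It survives as a factor $e^{-\langle\zeta\rangle|x-y|^2/(4h)}$ in the amplitude. That factor is not an admissible $S_\delta$ amplitude for $\delta<1/2$, since an $x$-derivative costs $(\langle\zeta\rangle/h)^{1/2}$. It must be removed separately. Passing to the left symbol turns it into a Gaussian smoothing of $p$ in $\zeta$ at scale $(h\langle\zeta\rangle)^{1/2}$; equivalently, trade $|x-y|^2$ for $-h^2\Delta_\zeta$ by integration by parts. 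This changes $p$ by terms of size $h\langle\zeta\rangle\,\partial_\zeta^2 p$, which lie in $h^{1-2\delta}S^{k-1}_\delta$. It is the $\xi$-side counterpart of your $\alpha_x$-rescaling argument, and with it your first step goes through.
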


The operator $\mc{T}_h$ is given by:
\begin{equation}
\label{equation:fbi}
\mc{T}_h f(\alpha) := \int_M e^{\tfrac{i}{h}\phi(\alpha,x)} a(\alpha,x) f(x) |dx| + R_h,
\end{equation}
for some elliptic $a \in S^{3n/4,n/4}(T^*\M \times \M)$, where $\phi$ is the phase function \eqref{equation:phase} and $R_h : \mc{D}'(\M) \to \langle\alpha_\xi\rangle^{-\infty}L^2(T^*\M)$ has norm $\mc{O}(h^\infty)$. Theorem \ref{theorem:fbi} for $\delta = 0$ can be found in \cite{Wunsch-Zworski-01}. The extension to $\delta \in [0,1/2)$ follows from adapting their arguments.

Let us also draw some conclusions. The $L^2$-norm of a function can be computed by means of the $L^2$-norm of its FBI transform since $\mc{T}_h : L^2(\M) \to L^2(T^*\M)$ is an isometry, that is, $\|\mc{T}_h u\|_{L^2(T^*\M)} = \|u\|_{L^2(\M)}$. More generally, the FBI transform \eqref{equation:fbi} allows to quantize rough symbols and to define generalized pseudodifferential operators $P_h := \mc{T}_h^* p \mc{T}_h$ for $p \in L^\infty(T^*\M)$. By construction, these operators are bounded on $L^2(\M)$. This will be particularly useful in what follows in order to microlocalize near the singular set $\Lambda$ (which could be very wild). Moreover, if $p \in L^\infty(T^*\M)$ is such that $p \geq 0$, then $\langle P_h u, u \rangle_{L^2(\M)} \geq 0$ for all $u \in L^2(\M)$. (Combining this observation with \eqref{equation:link} yields a short proof of the sharp Gärding inequality \cite[Theorem 4.32]{Zworski-12}.)

\subsection{Radial estimates}

\label{section:proofs}

We now go back to the initial geometric setting, that is, $\M := SM$ is the unit tangent bundle of a nonpositively curved closed Riemannian surface and $X$ is the geodesic vector field. The main key analytic estimate is the following result, which we refer to as a \emph{parabolic radial estimate}:

\begin{proposition}[Parabolic radial estimate]
\label{proposition:estimate}
There exists a pseudodifferential operator $C_h := \Op_h(c)$, where $c=1$ in a small neighborhood of $\partial_\infty T^*\M \cap \HH^*$ and $c=0$ on the complement of a slightly larger neighborhood, such that the following holds: for all $s < 1/4$, there exists a constant $C := C(s) > 0$ such that for all $u \in L^2(\M)$ with $Xu \in L^2(\M)$ and $\mathrm{supp}(u) \subset \Lambda$, one has:
\begin{equation}
\label{equation:main-estimate}
\|C_h u\|^2_{L^2} \leq C \left(h^{-s}\|Xu\|_{L^2}^2 + h^s \|u\|_{L^2}^2 \right).
\end{equation}
\end{proposition}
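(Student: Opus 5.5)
We prove \eqref{equation:main-estimate} by a positive commutator (escape function) argument, using the FBI transform of Theorem \ref{theorem:fbi} to quantize a rough, exotic escape function adapted to the flat dynamics over $\Lambda$.

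\textbf{Approach.} Since $\mathrm{supp}(u) \subset \Lambda$ and $\Lambda$ is flow-invariant, only the lifted dynamics over $\Lambda$ matter. There \eqref{equation:tm-star} gives $\xi_\psi \mapsto \xi_\psi - t\xi_\beta$. The symbol
\[
f_0 := \frac{\xi_\psi\,\xi_\beta}{|\xi|^2}
\]
satisfies $\mathbf{X} f_0 = -\xi_\beta^2/|\xi|^2 + \mathcal{O}(\xi_\beta^2\xi_\psi^2/|\xi|^4)$. Near $\HH^*$ the leading term is $\approx -1$, so $f_0$ is strictly monotone there. Near $\V^*$, however, $\mathbf{X}f_0 \sim -\tan^2\theta$ by \eqref{equation:order2}, so the derivative degenerates quadratically. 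This parabolic degeneracy is what produces a loss in powers of $h$ rather than a clean radial estimate.

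\textbf{Choice of escape function.} We introduce an $h$-dependent angular cutoff at scale $\theta \sim h^{\delta}$ around $\V^*$, with $\delta < 1/2$. Concretely, take
\[
g = \chi_{\mathrm{rad}}(|\xi|)\, F_h(\theta),
\]
where $\chi_{\mathrm{rad}}$ localizes to $|\xi| \sim h^{-1}$ and $F_h$ is built as follows:
\begin{itemize}
\item $F_h' \ge 0$, with $F_h'(\theta) \gtrsim 1$ on a fixed neighbourhood of $\HH^*$;
\item $F_h$ is constant on $|\theta| \le h^{\delta}$, using the $\theta$-coordinate near $\V^*_\pm$;
\item $F_h$ lies in $S^0_\delta$.
\end{itemize}
Using \eqref{equation:order2}, and on $\partial_\infty\Sigma$ away from $\mathcal{C}$ the twist from Lemma \ref{lemma:non-tangent}, we get
\[
\mathbf{X} g \ge c\, \mathbf{1}_{\HH^*\text{-nbhd}} \;+\; c\, h^{2\delta}\, \mathbf{1}_{h^\delta \le |\theta|}.
\]
Only the behaviour over $\pi^{-1}(\Lambda)$ is needed. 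Because $u$ is supported in $\Lambda$, we may multiply the symbol by $\mathbf{1}_{\pi^{-1}(\Lambda)}$ (a rough symbol) and set $G := \mathcal{T}_h^* g\, \mathcal{T}_h$. The FBI transform handles the non-smooth localization and gives positivity, i.e.\ a sharp Gårding inequality, for free.

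\textbf{Commutator identity.} We then compute
\[
2\,\mathrm{Re}\,\langle G u, hXu\rangle = \langle [G, hX]u, u\rangle.
\]
By \eqref{equation:link} and Egorov \eqref{equation:egorov}, $[G,hX] = h\,\mathcal{T}_h^*(\mathbf{X}g)\mathcal{T}_h$ up to errors $\mathcal{O}(h^{2-2\delta})$ in $\Psi^0_{h,\delta}$. Positivity then yields
\[
h\,\|C_h u\|^2 \lesssim h\,\|u\|\,\|Xu\| + h^{2-2\delta}\|u\|^2 + (\text{terms near } \V^* \text{ which have the good sign}).
\]
This gives $\|C_h u\|^2 \lesssim h^{-s}\|Xu\|^2 + h^{s}\|u\|^2 + h^{1-2\delta}\|u\|^2$ after Cauchy--Schwarz. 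However, the region $|\theta| \le h^\delta$, where no escape occurs, still has to be controlled.

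\textbf{Main obstacle: balancing exponents near $\V^*$.} The key input is the time the flow needs to carry the angle from $h^\delta$ to order $1$. By \eqref{equation:tm-star} this time is about $h^{-\delta}$, which means integrating $X$ over that time interval. The intended outcome is a bound of the schematic form
\[
\|C_h u\|^2 \lesssim h^{-2\delta}\|Xu\|^2 \cdot(\ldots) + h^{1-2\delta}\|u\|^2,
\]
and choosing $\delta$ to balance the exponents should give $s < 1/4$. I expect this step to be the hardest: one must check that the exotic calculus errors (order $h^{1-2\delta}$), the parabolic degeneracy $\tan^2\theta$, and the roughness of $\Lambda$ fit together. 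The first thing I would try is $\delta = 3/8$. If the bookkeeping fails, I would use a two-scale escape function instead, combining $\tan\theta$ weighted by $h^{-\delta}$ on the strip with $F$ outside it, and optimize the exponents afterwards.
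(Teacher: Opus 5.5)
\textbf{There is a genuine gap}, and it sits where the paper's main idea is.

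First, the escape function you describe cannot exist. Over a point of $\Lambda$ the lifted flow on the circle of directions of $\Sigma$ is the shear \eqref{equation:tm-star}. It carries directions all the way around, $\V^*_+\to\HH^*_+\to\V^*_-\to\HH^*_-\to\V^*_+$, and $\V^*_\pm$ are its only (parabolic) fixed points. A continuous function of $\theta$ that is nondecreasing along this flow must be constant. So your bound $\mathbf{X}g\ge c\mathbf{1}_{\HH^*\text{-nbhd}}+ch^{2\delta}\mathbf{1}_{h^\delta\le|\theta|}\ge 0$ is false. A jump exactly at $\V^*_\pm$ would evade this, but then $F_h\notin S^0_\delta$. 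The factor $\chi_{\mathrm{rad}}(|\xi|)$ makes things worse, since the shear does not preserve $|\xi|$.

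Hence there are no ``terms near $\V^*$ with the good sign'': some region where the symbol decreases along the flow is unavoidable, and the proof consists in making that region cheap. Here the parabolic degeneracy \eqref{equation:order2} is the key input, not the obstacle. The paper's $a$ is an $h^\delta$-smoothing of such a jump at $\V^*_-$. It drops from $1$ to $0$ in the sector $-h^\delta\le\xi_\beta/|\xi_\psi|\le 0$, on the side of $\V^*_-$ being repelled, where the angular speed is $\tan^2\theta\lesssim h^{2\delta}$. Thus $\mathbf{X}a=\mc{O}(h^{-\delta}\cdot h^{2\delta})=\mc{O}(h^{\delta})$ there, which is Lemma \ref{lemma:escape}(ii). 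Your plan to control the strip $|\theta|\le h^\delta$ by propagating for times $h^{-\delta}$ is not needed, since $C_h$ only lives near $\HH^*$.

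Second, your localization to $\Lambda$ is not legitimate. Multiplying $g$ by $\mathbf{1}_{\pi^{-1}(\Lambda)}$ before commuting with $hX$ leaves the symbolic calculus: \eqref{equation:link} and Egorov say nothing about $[\mc{T}_h^*(g\mathbf{1}_{\pi^{-1}(\Lambda)})\mc{T}_h,hX]$. Moreover, $\mc{T}_hu$ is not supported in $\pi^{-1}(\Lambda)$. It is only concentrated, up to $\mc{O}(e^{-1/(Ch^{\eta'})})$, in an $(h/\langle\xi\rangle)^{1/2-\eta}$-neighbourhood of it, so this cutoff changes $\langle Gu,u\rangle$ by a non-negligible amount.

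The paper instead keeps $a\in S^0_\delta$ and applies Egorov for a fixed time $t_0$. Only afterwards does it insert the rough cutoff $p$ (indicator of that neighbourhood), where the sole property used is positivity of $\mc{T}_h^*q\mc{T}_h$ for $q\ge 0$. Since $\Phi_{t_0}^*a-a$ is $h^{-\delta}$-Lipschitz in $x$, its negative part on the neighbourhood is $\mc{O}(h^{\delta}+h^{1/2-\eta-\delta})$. Adding the calculus error $h^{1-2\delta}$, the optimum is $\delta=1/4$, which gives $s<1/4$. Your plan never identifies the two competing losses $h^{\delta}$ and $h^{1/2-\delta}$, so the exponent bookkeeping cannot close. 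Your guess $\delta=3/8$ would only give $h^{1/8}$.
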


%

The previous estimate does not use the genus $\geq 2$ assumption and holds on the flat torus as well. Before proving Proposition \ref{proposition:estimate}, let us show how it implies Theorem \ref{theorem:main}.

\begin{proof}[Proof of Theorem \ref{theorem:main}]
Let $u \in L^2(SM)$ such that $X u = 0$. Write $u := u_1 + u_2$ with $u_1 := \mathbf{1}_\Lambda \cdot u$ and $u_2 := \mathbf{1}_{SM\setminus \Lambda} \cdot u$. Note that $Xu_1 = Xu_2 = 0$ and $u_1, u_2 \in L^2(SM)$. By ergodicity of the geodesic flow on $SM \setminus \Lambda$ with respect to the Liouville measure (see \cite[Theorem 10.10]{Barreira-Pesin-13}), we find that $u_2 = c \cdot\mathbf{1}_{SM \setminus \Lambda} = c\cdot(1-\mathbf{1}_\Lambda)$ for some constant $c \in \C$. Hence, the claim reduces to showing that if $w \in L^2(SM)$, $\mathrm{supp}(w) \subset \Lambda$ and $X w = 0$, then $w$ is microlocally $H^{1/8-}$ outside of $\V^*$; it then suffices to apply this fact with $w=u_1$ and $w=\mathbf{1}_\Lambda$.

Suppose that $w$ is as above. Then \eqref{equation:main-estimate} yields $\|C_h w\|^2_{L^2} = \mc{O}(h^{1/4-\eps})$ (for any $\eps > 0$). In turn, by Lemma \ref{lemma:sobolev}, this implies that $w$ is microlocally $H^{1/8-}$ on a conic neighborhood of $\HH^*$. By standard propagation of singularities, it then implies that $u$ is microlocally $H^{1/8-}$ everywhere except on $\V^*$, that is, for all $A \in \Psi^0(\M)$ such that $\WF(A) \cap \V^* = \emptyset$, one has $Aw \in H^{1/8-}$. This proves the claim.
\end{proof}

%

\begin{proof}[Proof of Proposition \ref{proposition:estimate}]
We now prove Proposition \ref{proposition:estimate}. Define
\[
\Sigma^\pm := \left\{ \xi \in T^*\M ~:~  \pm \xi_\beta \geq 0 \right\}.
\]
Observe that it suffices to prove the estimate \eqref{equation:main-estimate} in Proposition \ref{proposition:estimate} with $c$ being microlocally equal to $1$ near $\partial_\infty T^*\M \cap \HH^* \cap \Sigma^+$: indeed, the same estimate also immediately applies for $\Sigma^-$ (by simply reversing $X$ to $-X$); it then suffices to sum the two inequalities. We will thus prove \eqref{equation:main-estimate} with $C_h$ microlocalized near $\partial_\infty T^*\M \cap \HH^* \cap \Sigma^+$.

Consider the following three functions:
\begin{enumerate}[label=(\roman*)]
\item $\chi_\alpha \in C^\infty_{\comp}(\R)$ is a nonnegative function such that $\supp(\chi_\alpha) \subset [-1,1]$, $\chi_\alpha \equiv 1$ on $[-1/2,1/2]$,
\item $\chi_\beta \in C^\infty(\R)$ is a nonnegative function such that $\chi_\beta \equiv 1$ on $[0,\infty)$, $\supp(\chi_\beta) \subset [-1,\infty)$,
\item $\chi_\psi \in C^\infty(\R)$ is a nonnegative function such that $\chi_\psi \equiv 1$ on $(-\infty,-2]$, $\supp(\chi_\psi) \subset (-\infty,-1]$.
\end{enumerate}
Define the symbol on $T^*\M$:
\begin{equation}
\label{equation:escape}
a(v,\xi) := \chi_\alpha(\xi_\alpha/\langle \xi_\beta\rangle) \chi_\beta(h^{-\delta} \xi_\beta/|\xi_\psi|) \chi_\psi(\eps^{-1} \xi_\psi/\langle \xi_\beta\rangle),
\end{equation}
where $\xi = \xi_\alpha \alpha + \xi_\beta \beta + \xi_\psi \psi$ and $\eps > 0$ will be chosen small enough (independently of $h$). The function $\chi_\beta(h^{-\delta} \xi_\beta/|\xi_\psi|)$ is irregular near $\xi = 0$ but $\chi_\psi(\xi_\psi/\langle \xi_\beta\rangle)$ vanishes on a $\mc{O}(1)$-neighborhood of $\xi=0$. Observe that $a$ belongs to an exotic symbol class as defined in \eqref{equation:exotic}. More precisely, the following holds:

\begin{lemma}
The symbol $a$ belongs to $S^{0}_{\delta}(T^*M)$.
\end{lemma}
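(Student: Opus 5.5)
The plan is to check the estimates \eqref{equation:exotic} for $a$ directly. I would work in the global frame $(\alpha,\beta,\psi)$ of $T^*\M$, so that $(v,\xi_\alpha,\xi_\beta,\xi_\psi)$ are global linear fibre coordinates. In any chart, $\xi$-derivatives are then constant linear combinations of $\partial_{\xi_\alpha},\partial_{\xi_\beta},\partial_{\xi_\psi}$, while $x$-derivatives act only through the smooth change of frame. Moreover $a$ depends on $v$ only through the coefficients $\xi_\alpha,\xi_\beta,\xi_\psi$, so it suffices to bound pure fibre derivatives of
\[
a=a_1a_2a_3,\qquad a_1:=\chi_\alpha(\xi_\alpha/\langle \xi_\beta\rangle),\quad a_2:=\chi_\beta(h^{-\delta} \xi_\beta/|\xi_\psi|),\quad a_3:=\chi_\psi(\eps^{-1} \xi_\psi/\langle \xi_\beta\rangle),
\]
and then apply Leibniz' rule. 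In the chart the variable $\xi$ is linear in $(\xi_\alpha,\xi_\beta,\xi_\psi)$ with $v$-smooth coefficients, so $x$-derivatives bring down only factors of the form $\xi\cdot\partial_\xi$. These are harmless once the fibre bounds are of the form $\langle\xi\rangle^{-|\alpha|}$.

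The first step is to record the geometry of $\supp a$. On $\supp a_3$ we have $\xi_\psi\le -\eps\langle\xi_\beta\rangle$, hence $|\xi_\psi|\ge \eps\langle \xi_\beta\rangle\ge\eps$. On $\supp a_1$ we have $|\xi_\alpha|\le\langle\xi_\beta\rangle$. Consequently $\langle\xi\rangle\simeq_\eps|\xi_\psi|$ on $\supp a$, and we stay at distance $\ge\eps$ from the zero section, where $a_2$ is singular.

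The second step treats each factor by the chain rule.
\begin{itemize}
\item For $a_3$: the variable $\xi_\psi/\langle\xi_\beta\rangle$ is a symbol of order $0$ in the region $|\xi_\psi|\gtrsim\langle\xi_\beta\rangle$. On $\supp\chi_\psi'$ one has $|\xi_\psi|\simeq\eps\langle\xi_\beta\rangle$, so each derivative gains $\langle\xi_\beta\rangle^{-1}\simeq\langle\xi\rangle^{-1}$.
\item For $a_2$: the function $g:=\xi_\beta/|\xi_\psi|$ is homogeneous of degree $0$ and smooth on $\{\xi_\psi\neq0\}$, so $|\partial^\gamma g|\lesssim\langle\xi\rangle^{-|\gamma|}$ there. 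Fa\`a di Bruno then gives $|\partial^\gamma a_2|\lesssim h^{-\delta|\gamma|}\langle\xi\rangle^{-|\gamma|}$, which is exactly the exotic loss allowed in \eqref{equation:exotic}.
\item For $a_1$: a derivative gains $\langle\xi_\beta\rangle^{-1}$, and it lands on $\supp\chi_\alpha'$, where $|\xi_\alpha|\simeq\langle\xi_\beta\rangle$.
\end{itemize}

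The main obstacle is $a_1$ in the regime $\langle\xi_\beta\rangle\ll|\xi_\psi|$, which is allowed on $\supp a_2\cap\supp a_3$ (for instance $\xi_\beta=0$, $\xi_\alpha\sim1$, $\xi_\psi\to-\infty$). There the gain is only $\langle\xi_\beta\rangle^{-1}$ and not $\langle\xi\rangle^{-1}$. I would first try to split $\supp a$ according to whether $\langle\xi_\beta\rangle\gtrsim h^{\delta}|\xi_\psi|$. On that piece the factor $a_1$ obeys exotic bounds with $\langle\xi_\beta\rangle^{-1}\lesssim h^{-\delta}\langle\xi\rangle^{-1}$, which is again in $S^0_\delta$. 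On the complementary piece $a_2\equiv1$ unless $\xi_\beta\ge0$ is small, and $a$ reduces essentially to $a_1a_3$ in a thin conic region around $\V^*$. If the uniform $\langle\xi\rangle^{-|\alpha|}$ decay genuinely fails there, I would verify that $a$ still satisfies the bounds $|\partial^\gamma_\xi\partial^\beta_x a|\le Ch^{-\delta(|\gamma|+|\beta|)}$, with uniformly bounded derivatives. This is what the composition and $L^2$ bounds \eqref{equation:composition}, \eqref{equation:l2-bound} and the FBI representation of Theorem~\ref{theorem:fbi} actually use at the scale $h$. The $\xi_\alpha$-direction only matters near $E_0^*$, where $X$ is elliptic, so any loss there can be absorbed by elliptic regularity. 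I expect settling precisely this $\chi_\alpha$ factor near $\V^*$ to be the delicate point of the lemma.
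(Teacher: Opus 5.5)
You handle the factors $\chi_\beta(h^{-\delta}\xi_\beta/|\xi_\psi|)$ and $\chi_\psi(\eps^{-1}\xi_\psi/\langle\xi_\beta\rangle)$ essentially correctly, and you located exactly the right regime. But you leave that regime unresolved, and it cannot be resolved: there the bound \eqref{equation:exotic} genuinely fails. So for $a$ exactly as defined in \eqref{equation:escape}, the lemma is false; the computation the paper omits overlooks this. Take $\xi_\beta=0$, $\xi_\alpha=s_0\in(1/2,1)$ with $\chi_\alpha'(s_0)\neq 0$, and $\xi_\psi\leq-2\eps$. Then $\chi_\beta(0)=1$ and $\chi_\psi(\xi_\psi/\eps)=1$. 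Since these two factors do not involve $\xi_\alpha$, one gets $\partial_{\xi_\alpha}a=\chi_\alpha'(s_0)$ for every $h$ and every such $\xi_\psi$. Now $\partial_{\xi_\alpha}$ is a combination of chart fibre derivatives with locally bounded coefficients. Hence $a\in S^0_\delta$ would force $|\chi_\alpha'(s_0)|\leq Ch^{-\delta}\langle\xi\rangle^{-1}$, which fails as $\xi_\psi\to-\infty$ at fixed $h$.

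Your fallbacks do not repair this.
\begin{itemize}
\item Bounds $|\partial_\xi^\gamma\partial_x^\beta a|\leq Ch^{-\delta(|\gamma|+|\beta|)}$ without $\xi$-decay also fail at these points. By your own chain-rule remark, $\partial_{x_k}a$ there equals $\chi_\alpha'(s_0)\,\partial_{x_k}\xi_\alpha$. Moreover $\partial_{x_k}\xi_\alpha=\xi([\partial_{x_k},X])$ contains the term $\xi_\psi\,\psi([\partial_{x_k},X])$, which is of size $|\xi_\psi|$ in a generic chart. Such a class would not be coordinate invariant anyway.
\item The appeal to elliptic regularity is misplaced. The bad set $\{\xi_\beta=\mc{O}(1),\ |\xi_\alpha|\sim1,\ |\xi_\psi|\to\infty\}$ accumulates conically on $\V^*\subset\Sigma$, where $X$ is characteristic, not on $E_0^*$.
\item Splitting along $\langle\xi_\beta\rangle\gtrsim h^\delta|\xi_\psi|$ only isolates the problem; it does not treat it.
\end{itemize}

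What is needed is to modify the symbol: replace $\langle\xi_\beta\rangle$ in the first factor by $|\xi_\psi|$ (or by $\langle(\xi_\beta,\xi_\psi)\rangle$).
\begin{itemize}
\item On $\supp a$ one still has $|\xi_\alpha|\lesssim|\xi_\psi|\simeq\langle\xi\rangle$. Each derivative of the new factor then gains $\langle\xi\rangle^{-1}$, and your computation gives $a\in S^0_\delta$.
\item Lemma \ref{lemma:escape} survives. The quantity $\xi_\alpha$ is flow-invariant, and over $\Lambda$ on $\{\xi_\psi<0\}$ one has $\mathbf{X}(\xi_\alpha/|\xi_\psi|)=-\xi_\alpha\xi_\beta/\xi_\psi^2$.
\item On $\Sigma^+$ this decreases $|\xi_\alpha|/|\xi_\psi|$, so the factor is nondecreasing if $\chi_\alpha$ is even and radially nonincreasing.
\item On $\Sigma^-\cap\supp\chi_\beta(h^{-\delta}\xi_\beta/|\xi_\psi|)$ it is $\mc{O}(h^\delta)$, so it only adds a harmless term to \eqref{equation:ha}.
\end{itemize}
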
 

The proof is a straightforward computation; it is omitted. The symbol $a$ is depicted in Figure \ref{figure}. We call $a$ an \emph{escape function} over $\Lambda$. This is justified by the following lemma:

\begin{lemma}
\label{lemma:escape}
There exists $\eps > 0$ small enough such that the escape function $a$ in \eqref{equation:escape} satisfies the following properties:
\begin{enumerate}[label=\emph{(\roman*)}]
\item On $\Sigma^+ \cap \pi^{-1}(\Lambda)$, one has:
\begin{equation}
\label{equation:escape-prop1}
\forall v \in \Lambda, \forall \xi \in T^*_v\M \cap \Sigma^+, \forall t \geq 0, \qquad a(\Phi_{t}(v,\xi)) - a(v,\xi) \geq 0.
\end{equation}
\item On $\Sigma^- \cap \pi^{-1}(\Lambda)$, one has:
\begin{equation}
\label{equation:escape-prop2}
\forall v \in \Lambda, \forall \xi \in T^*_v\M \cap \Sigma^-, \forall t \geq 0, \qquad a(\Phi_{t}(v,\xi)) - a(v,\xi) \geq \mc{O}_t(h^{\delta}).
\end{equation}
\item There exists a conic neighborhood $\mc{A}$ of $\HH^* \cap \Sigma^+$ over $\M$ such that for $t_0 \gg 0$ large enough $\Phi_{t_0}^*a-a = 1$ on $\mc{A}$.
\end{enumerate}
\end{lemma}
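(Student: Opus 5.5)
The proof is a direct computation using the explicit formula \eqref{equation:tm-star} for the lifted flow over $\Lambda$. For $v\in\Lambda$ the orbit stays in $\Lambda$, and the flow acts by $(\xi_\alpha,\xi_\beta,\xi_\psi)\mapsto(\xi_\alpha,\xi_\beta,\xi_\psi-t\xi_\beta)$. So $\xi_\alpha$ and $\xi_\beta$ are conserved, and it suffices to follow each of the three factors of $a$ in \eqref{equation:escape} along $t\mapsto\xi_\psi-t\xi_\beta$.

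\textbf{Item (i).} Take $\xi_\beta\ge0$ and $t\ge0$. The first factor $\chi_\alpha(\xi_\alpha/\langle\xi_\beta\rangle)$ is constant along the flow. Consider first the case where the third factor is nonzero at time $0$, so $\xi_\psi\le-\eps\langle\xi_\beta\rangle<0$. Then $\xi_\psi(t)=\xi_\psi-t\xi_\beta\le\xi_\psi<0$, so $|\xi_\psi(t)|$ increases and $\xi_\psi(t)/\langle\xi_\beta\rangle$ decreases.
\begin{itemize}
\item The middle factor is identically $1$, since its argument $h^{-\delta}\xi_\beta/|\xi_\psi|\ge0$ and $\chi_\beta\equiv1$ on $[0,\infty)$.
\item The third factor does not decrease, provided $\chi_\psi$ is chosen nonincreasing; we are free to impose this.
\end{itemize}
Hence $a$ does not decrease. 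If instead the third factor vanishes at time $0$, then $a(v,\xi)=0$ and the inequality is trivial because $a\ge0$.

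\textbf{Item (ii).} Take $\xi_\beta<0$. Now $\xi_\psi(t)$ increases, so the third factor may decrease, and this loss has to be compensated by the middle factor. Where $a(v,\xi)\neq0$ we have $h^{-\delta}\xi_\beta/|\xi_\psi|\ge-1$, so $|\xi_\beta|\le h^{\delta}|\xi_\psi|$. Also $|\xi_\psi|\ge\eps\langle\xi_\beta\rangle$ on the support of the third factor.

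The key estimate is therefore
\[
\bigl|\xi_\psi(t)-\xi_\psi\bigr|/\langle\xi_\beta\rangle = t|\xi_\beta|/\langle\xi_\beta\rangle \le t.
\]
This alone is not small, so I would split into two regimes.
\begin{itemize}
\item If $|\xi_\psi|\ge h^{-\delta}\langle\xi_\beta\rangle$, the third factor sits deep in the region where $\chi_\psi\equiv1$ and stays there for bounded $t$. Meanwhile the middle argument changes by $h^{-\delta}|\xi_\beta|\,t|\xi_\beta|/|\xi_\psi|^2$, which is $\mathcal O_t(h^{\delta})$.
\item Otherwise $|\xi_\beta|\le h^{\delta}|\xi_\psi|\le\langle\xi_\beta\rangle$. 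If $|\xi_\beta|\lesssim1$, then $|\xi_\beta|\le h^{\delta}|\xi_\psi|$ gives a relative change of the third argument of order $t|\xi_\beta|\le t h^{\delta}|\xi_\psi|$; this must be rechecked against $\langle\xi_\beta\rangle\eps$. I expect this bookkeeping, which must keep every change $\mathcal O_t(h^\delta)$ uniformly in $\xi$, to be the main obstacle. If it fails, I would fall back on the Lipschitz bounds $|\partial\chi|\le C$ together with $|\xi_\beta|\le h^\delta|\xi_\psi|$ in each regime.
\end{itemize}

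\textbf{Item (iii).} Near $\HH^*\cap\Sigma^+$ we have $\xi_\beta>0$ large compared with $|\xi_\alpha|$ and $|\xi_\psi|$. Choose $\mathcal A$ to be a thin conic neighborhood, say $|\xi_\alpha|\le\tfrac12\langle\xi_\beta\rangle$ and $|\xi_\psi|\le\xi_\beta$.
\begin{itemize}
\item At time $0$, if $\xi_\psi\ge-\eps\langle\xi_\beta\rangle$ then $a=0$. To ensure this I would shrink $\mathcal A$ to $|\xi_\psi|\le\tfrac\eps2\xi_\beta$, with $|\xi|$ large.
\item After time $t_0\ge 2\eps+2$, we get $\xi_\psi-t_0\xi_\beta\le-2\eps\langle\xi_\beta\rangle$. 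So the third factor equals $1$, the middle factor equals $1$ since $\xi_\beta\ge0$, and $\chi_\alpha=1$.
\end{itemize}
Thus $\Phi_{t_0}^*a-a=1$ on $\mathcal A$, uniformly in $h$.
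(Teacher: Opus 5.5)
Your item (i) is correct, and you rightly note that it needs $\chi_\psi$ nonincreasing, which the paper assumes tacitly. Your item (iii) is correct over $\Lambda$. The gap is in item (ii), which is the only nontrivial part of the lemma, and you leave it open: your second regime does not close, and the proposed Lipschitz fallback cannot close it.

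The reason is that you compare times $0$ and $t$ using support information read off at time $0$ only. When $\eps\langle\xi_\beta\rangle\le|\xi_\psi|<h^{-\delta}\langle\xi_\beta\rangle$, the constraint $|\xi_\beta|\le h^\delta|\xi_\psi|$ yields in general only $|\xi_\beta|<\langle\xi_\beta\rangle$ (take $|\xi_\beta|=1/4$, $|\xi_\psi|=h^{-\delta}/2$). So the argument of $\chi_\psi$ can move by an amount of order $t/\eps$, and a Lipschitz bound from time-$0$ data gives $O(t/\eps)$, not $O_t(h^\delta)$. Whether $\chi_\psi$ actually changes depends on whether the orbit later crosses $\{\eps^{-1}\xi_\psi/\langle\xi_\beta\rangle\in[-2,-1]\}$, and the relevant constraint must be read at that later time. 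Also, the middle factor does not compensate the loss in the third. On $\Sigma^-$ the ratio $|\xi_\beta|/|\xi_\psi(s)|$ grows along the flow, so for monotone cutoffs both factors decrease; the point is that both decrease slowly.

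The missing idea is to argue infinitesimally, as the paper does. Differentiating along \eqref{equation:tm-star}, on $\Sigma^-\cap\pi^{-1}(\Lambda)$ one gets, with all cutoffs evaluated at the same point,
\[
|\mathbf{X}a|\le h^{-\delta}\frac{\xi_\beta^2}{\xi_\psi^2}\,|\chi_\beta'|\,\chi_\psi+\eps^{-1}\frac{|\xi_\beta|}{\langle\xi_\beta\rangle}\,\chi_\beta\,|\chi_\psi'|.
\]
Each term is then controlled by the supports of the other factors at that same point:
\begin{enumerate}
\item On $\supp\chi_\beta'$ one has $|\xi_\beta/\xi_\psi|\le h^\delta$, so the first term is $O(h^\delta)$. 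This is where the second-order vanishing at $\V^*$ enters.
\item On $\supp\chi_\beta\cap\supp\chi_\psi'$ one has $|\xi_\beta|\le h^\delta|\xi_\psi|\le 2\eps h^\delta\langle\xi_\beta\rangle$, so the second term is $O(h^\delta)$ uniformly in $\xi$.
\end{enumerate}
Since $\Sigma^-\cap\pi^{-1}(\Lambda)$ is flow-invariant, integrating over $[0,t]$ gives $a\circ\Phi_t-a\ge -Cth^\delta$.

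A smaller point concerns (iii). It is stated on a conic neighborhood over all of $\M$, and the paper justifies it via the twist property (Lemma \ref{lemma:non-tangent} and \eqref{equation:covertical}). Your computation uses \eqref{equation:tm-star}, which holds only over $\Lambda$. Off $\Lambda$ one has $\dot\xi_\beta=K\xi_\psi$ and $\dot\xi_\psi=-\xi_\beta$, so $r=\xi_\psi/\xi_\beta$ solves $\dot r=-1-Kr^2$. Since $K\le 0$, this drives $r$ below $-(2\|K\|_\infty)^{-1/2}$ in uniformly bounded time and keeps it there, while $\xi_\beta$ stays comparable to its initial value. Choosing $\eps$ small relative to $\|K\|_\infty^{-1/2}$ then gives (iii) globally. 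Only the part over $\Lambda$ is used in the proof of Proposition \ref{proposition:estimate}, so this is less serious than the gap in (ii).
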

 
 \begin{center}
\begin{figure}[htbp!]
\includegraphics[scale=0.8]{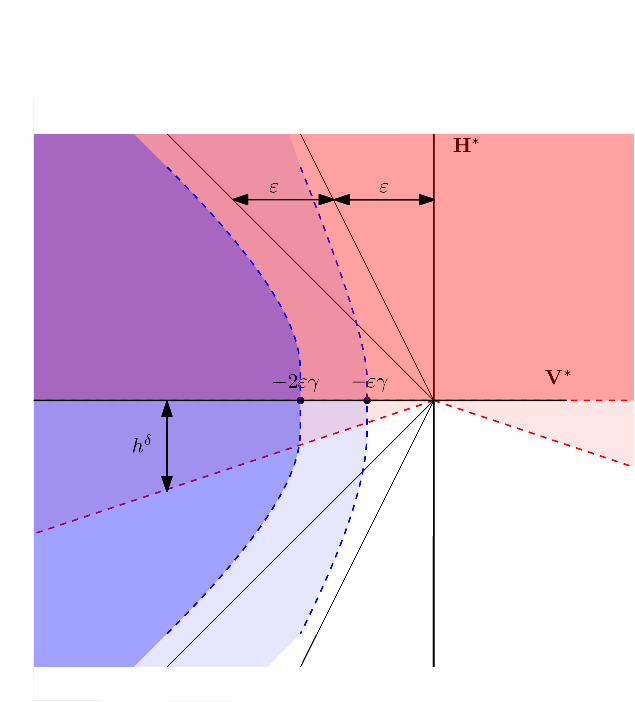}
\caption{The escape function $a$ on the characteristic set $\Sigma$. The red region represents the function $f := (\xi_\beta,\xi_\psi) \mapsto \chi_\beta(h^{-\delta} \xi_\beta/|\xi_\psi|)$: in light red, its support; in dark red, the set $\left\{f=1\right\}$. The blue region represents the function $g := (\xi_\beta,\xi_\psi) \mapsto \chi_\psi(\eps^{-1} \xi_\psi/\langle \xi_\beta\rangle)$: in light blue, its support; in dark blue, the set $\left\{g=1\right\}$. The support of $a$ is therefore represented as the intersection of the light red and light blue regions. The set $\left\{a=1\right\}$ is represented as the intersection of the dark blue and dark red regions. The black arrows measure the projective distance between the lines.}
\label{figure}
\end{figure}
\end{center} 

\begin{proof}
Recall from \eqref{equation:tm-star} that the lifted dynamics is given over $\Lambda$ by 
\[
\Phi_t(v,\xi) = (\varphi_t v, \xi_\alpha \alpha(\varphi_t v) + \xi_\beta \beta(\varphi_t v) + (\xi_\psi- t \xi_\beta) \psi(\varphi_t v))
\]
In $\Sigma^+ \cap \pi^{-1}(\Lambda)$, the expression for $a$ is given by
\[
a(v,\xi) := \chi_\alpha(\xi_\alpha/\langle \xi_\beta\rangle) \chi_\psi(h^{-\delta}\xi_\psi/\langle \xi_\beta\rangle)
\]
and it is thus immediate to verify that $a(\Phi_{t}(v,\xi)) - a(v,\xi) \geq 0$. This proves (i). Item (iii) is also immediate if we take $\eps > 0$ small enough due to the projective properties of the lifted flow near $\HH^* \cap \partial_\infty\Sigma^+$, see Lemma \ref{lemma:non-tangent} and \eqref{equation:covertical}.

The only non-trivial item to check is (ii). Recall that $\mathbf{X} \in C^\infty(T^*\M, T(T^*\M))$ denotes the generator of $(\Phi_t^{T^*\M})_{t \in \R}$. Then, on $\pi^{-1}(\Lambda)$, we have:
\begin{equation}
\label{equation:ha}
\begin{split}
\mathbf{X}a &= \chi_\alpha(\xi_\alpha/\langle \xi_\beta\rangle) \left( h^{-\delta} \left(\xi_\beta/\xi_\psi\right)^2 \chi_\beta'(h^{-\delta} \xi_\beta/|\xi_\psi|) \chi_\psi(\eps^{-1}\xi_\psi/\langle \xi_\beta \rangle) \right. \\
& \hspace{4cm} \left. - \eps^{-1} (\xi_\beta/\langle \xi_\beta \rangle) \chi_\beta(h^{-\delta} \xi_\beta/|\xi_\psi|) \chi_\psi'(\eps^{-1}\xi_\psi/\langle \xi_\beta \rangle) \right).
\end{split}
\end{equation}
Observe that on the support of $ \chi_\beta'(h^{-\delta} \xi_\beta/|\xi_\psi|)$ intersect with $\Sigma^-$, $|\xi_\beta/\xi_\psi| \leq h^\delta$ and thus the first term in \eqref{equation:ha} is bounded by $\mc{O}(h^\delta)$. As to the second term, we observe that on the support of $\chi_\beta(h^{-\delta} \xi_\beta/|\xi_\psi|) \chi_\psi'(\eps^{-1}\xi_\psi/\langle \xi_\beta \rangle)$, one has $|\xi_\beta| \leq Ch^\delta$. As a consequence, the second term is also bound by $\mc{O}(h^\delta)$. Integrating in time between $0$ and $t$, we obtain (ii). This concludes the proof.
\end{proof}

\begin{remark}
It should be observed that the important point in the proof is that the $\partial_\theta$ component of the flow on $\partial_\infty \Sigma$ vanishes to order $\geq 2$ near $\V^*$, as computed in \eqref{equation:order2}. This guarantees that item (ii) of Lemma \ref{lemma:escape} holds.
\end{remark}

Set $A_h :=  \mc{T}_h^* a \mc{T}_h$ and $B_{h} := e^{+t_0X}A_he^{-t_0X}$, where $t_0 > 0$ is given by Lemma \ref{lemma:escape}. We have:
\begin{equation}
\label{equation:ipp}
\begin{split}
\langle A_he^{-t_0X}u,& e^{-t_0 X} u\rangle_{L^2} -  \langle A_h u,u\rangle_{L^2} \\
& =  - \int_0^t \langle A_h e^{-sX}Xu, e^{-sX}u\rangle_{L^2} + \langle A_h e^{-sX}u, e^{-sX}Xu\rangle_{L^2} |ds|.
\end{split}
\end{equation}
Using Cauchy-Schwarz, the right-hand side is bounded (in absolute-value) by
\begin{equation}
\label{equation:rhs}
|\mathrm{rhs.}| \leq C \|Xu\|_{L^2}\|u\|_{L^2},
\end{equation}
where $C >0$ is some uniform constant depending on $A_h$ only.
Using the fact that the geodesic flow is volume-preserving, the left-hand side of \eqref{equation:ipp} can be written as
\[
\langle A_he^{-t_0X}u, e^{-t_0 X} u\rangle_{L^2} -  \langle A_h u,u\rangle_{L^2} = \langle (e^{t_0 X} A_h e^{-t_0 X}-A_h)u,u\rangle_{L^2},
\]
and by Egorov's Theorem, we have
\[
e^{t_0 X} A_h e^{-t_0 X}-A_h = \mc{T}_h^* (\Phi_{t_0}^*a-a) \mc{T}_h + \mc{O}_{\Psi^{-1}_{h,\delta}(\M)}(h^{1-2\delta}).
\]
As a consequence, the left-hand side of \eqref{equation:ipp} is equal to:
\begin{equation}
\label{equation:lhs}
\mathrm{lhs.} = \langle \mc{T}_h^*(\Phi_{t_0}^*a-a)\mc{T}_h u, u\rangle_{L^2} + \mc{O}(h^{1-2\delta}\|u\|_{H^{-1}_h}\|u\|_{L^2}).
\end{equation}

By the third item of Lemma \ref{lemma:escape}, there exists a symbol $c \in S^0(T^*\M)$ supported for $|\xi| \gg 1$ large enough in a conic neighborhood of $\HH^* \cap \Sigma^+$, such that $c \equiv 1$ on a (smaller) conic neighborhood of $\HH^* \cap \Sigma^+$ and $(\Phi_{t_0}^*a -a) - c \geq 0$ on the support of $c$. Hence, setting $C_h := \mc{T}_h^*c\mc{T}_h \in \Psi^0_h(\M)$, and combining both \eqref{equation:rhs} and \eqref{equation:lhs} in \eqref{equation:ipp}, we have:
\begin{equation}
\label{equation:middle}
\begin{split}
\langle \mc{T}_h^* ((\Phi_{t_0}^*a -a) - c) \mc{T}_h u, u \rangle  + \langle C_h u, u\rangle = \mc{O}(h^{1-2\delta}\|u\|_{H^{-1}_h}\|u\|_{L^2} + \|Xu\|_{L^2}\|u\|_{L^2}).
\end{split}
\end{equation}

We now aim to show that $\langle \mc{T}_h^* ((\Phi_{t_0}^*a -a) - c) \mc{T}_h u, u \rangle$ is bounded from below by a suitable power of $h$ as $h \to 0$. For that, let $\eps > 0$ and $\mathbf{1}_{\Lambda(\eps)}$ be the characteristic function of 
\[
\Lambda(\eps) := \{v \in SM ~:~ d(v,\Lambda) \leq \eps\}.
\]
Define $p \in L^\infty(T^*\M)$ by
\[
p(x,\xi) := \mathbf{1}_{\Lambda((h/\langle \xi \rangle)^{1/2-\eta})}(x),
\]
where $\eta > 0$ will be taken arbitrarily small. That is, at height $\xi$, we take the characteristic function of an $(h/\langle \xi \rangle)^{1/2-\eta}$-neighborhood of $\Lambda$. Observe that, since $(\Phi_{t_0}^*a -a) \in S^0(T^*\M)$ is $0$-homogeneous (for large $\xi$), varies at scale $\sim h^{-\delta}$ with respect to the $x$-variable (uniformly in $\xi$), and is $\geq - \mc{O}(h^\delta)$ on $\Lambda$ by the second item of Lemma \ref{lemma:escape}, there exists $C > 0$ such that for all $\eps > 0$ small enough, $(\Phi_{t_0}^*a -a)(x,\xi) \geq -C(h^\delta + h^{-\delta} \eps)$ for all $(x,\xi) \in T^*\M$ such that $x$ is in an $\eps$-neighborhood of $\Lambda$. As a consequence, we have:
\[
\begin{split}
&\langle \mc{T}_h^* ((\Phi_{t_0}^*a -a) - c) \mc{T}_h u, u \rangle \\
& = \langle \mc{T}_h^* \underbrace{((\Phi_{t_0}^*a -a) - c)p}_{\geq -C(h^\delta +h^{-\delta} (h/\langle \xi \rangle)^{1/2-\eta})}  \mc{T}_h u, u \rangle  + \langle \mc{T}_h^* (\Phi_{t_0}^*a -a) - c)(1-p) \mc{T}_h u, u \rangle \\
& \geq - C (h^\delta + h^{1/2-\eta-\delta})\|u\|^2_{L^2}  + \langle \mc{T}_h^* (\Phi_{t_0}^*a -a) - c)(1-p) \mc{T}_h u, u \rangle.
\end{split}
\]
Using that $\supp(u)\subset\Lambda$, we will now show that
\begin{equation}
\label{equation:bound}
\langle \mc{T}_h^* (\Phi_{t_0}^*a -a) - c)(1-p) \mc{T}_h u, u \rangle = \mc{O}(e^{-1/(Ch^{\eta'})}\|u\|^2_{L^2}),
\end{equation}
for some $C,\eta' > 0$. Observe that, by definition of $p$, the following holds: given $\alpha \in T^*\M$,
\[
1-p(\alpha) \neq 0 \implies \forall x \in \Lambda, \quad d(\alpha_x,x) \geq d(\alpha_x,\Lambda) > (h/\langle \alpha_\xi \rangle)^{1/2-\eta}.
\]
By definition \eqref{equation:phase} of the phase $\phi$, there exist constant $C_1,C_2 > 0$ large enough such that
\[
d(\alpha_x,x) \geq (h/\langle \alpha_\xi \rangle)^{1/2-\eta} \implies |e^{\tfrac{i}{h}\phi(\alpha,x)}| \leq C_2 e^{-C_1\tfrac{\langle \alpha_\xi \rangle}{h} \left( \tfrac{h}{\langle \alpha_\xi \rangle} \right)^{2(1/2-\eta)}}
\]
We thus obtain:
\[
\begin{split}
|(1-p(\alpha))\mc{T}_h u(\alpha)|  &= \left| \int_{\M} (1-p(\alpha)) e^{\tfrac{i}{h}\phi(\alpha,x)} a(\alpha,x) u(x) |dx| \right| \\
& \lesssim \left| \int_{\Lambda} |1-p(\alpha)|^2 e^{-C_1\tfrac{\langle \alpha_\xi \rangle}{h} \left( \tfrac{h}{\langle \alpha_\xi \rangle} \right)^{2(1/2-\eta)}} |a(\alpha,x)|^2 |dx| \right|^{1/2} \|u\|_{L^2} \\
& \lesssim  h^{-3n/4} \langle \alpha_\xi \rangle^{n/4} \exp\left(-C_1 \left( \dfrac{\langle \alpha_\xi \rangle}{h}\right)^{2\eta}\right)\|u\|_{L^2}.
\end{split}
\]
This yields
\[
\|(1-p)\mc{T}_h u\|_{L^2(T^*M)} = \mc{O}(e^{-1/(Ch^{\eta'})}\|u\|_{L^2}),
\]
for some other constants $C, \eta' > 0$. Finally, we obtain:
\[
\begin{split}
|\langle \mc{T}_h^* ((\Phi_{t_0}^*a -a) - c)(1-p) \mc{T}_h u, u \rangle| & \lesssim \|\mc{T}_h u\|_{L^2(T^*M)} \|(1-p) \mc{T}_h u\|_{L^2(T^*M)} \\
& \lesssim e^{-1/(Ch^{\eta'})} \|u\|^2_{L^2}.
\end{split}
\]
which proves \eqref{equation:bound}.

Going back to \eqref{equation:middle}, we obtain that
\[
\begin{split}
\langle C_h u, u \rangle_{L^2} \leq  C \big((h^{\delta} + h^{1-2\delta} + h^{1/2-\eta-\delta}  + e^{-1/(Ch^{\eta'})})\|u\|_{L^2}^2  + \|Xu\|_{L^2}\|u\|_{L^2}\big),
\end{split}
\]
and this holds for any $\eta > 0$. The optimal value is $\delta = 1/4$ which yields
\[
\langle C_h u, u \rangle_{L^2} =  \|C_h'u\|^2_{L^2} \leq C \|u\|_{L^2}\left(\|Xu\|_{L^2} + h^{1/4-\eta}\|u\|_{L^2}\right),
\]
for some other pseudodifferential operator $C_h' \in \Psi^0_h(\M)$, with same elliptic properties as $C_h$ near $\HH^* \cap \partial_\infty \Sigma^+$. Set $s := 1/4-\eta$. The estimate is finally obtained by bounding:
\[
\|u\|_{L^2}\|Xu\|_{L^2} \leq (h^{-s}\|Xu\|^2_{L^2} + h^s\|u\|^2_{L^2})/2.
\]
This concludes the proof.
\end{proof}

%
%

\section{Regularity of the singular locus} \label{section:conclusion}

\subsection{Regularity of Fourier modes}

We now further describe the regularity of Fourier modes of invariant functions.:

\begin{lemma}
\label{proposition:regularity-fourier-modes}
Let $u = \sum_{k \in \Z} u_k \in L^2(SM)$ such that $X u = 0$. Then
\[
u_k \in H^{5/8-}(M,\Omega_k), \qquad \forall k \in \Z.
\]
\end{lemma}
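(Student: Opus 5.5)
The plan is to combine Theorem \ref{theorem:main}, which gives $H^{1/8-}$ microlocal regularity away from $\V^*$, with a velocity-averaging argument in the spirit of Gérard--Golse. The key observation is that $\V^*$ is exactly the set of covectors annihilating $\R X\oplus\HH$. A single Fourier mode $u_k$ only oscillates like $e^{ik\theta}$ in the fiber, so its frequencies must be controlled in the horizontal directions. The heuristic is that averaging in $\theta$ (projecting onto a Fourier mode) converts regularity in the fiber direction, i.e.\ regularity near $\V^*$, into spatial regularity. The gain exponent is then $1/2$, which is the standard averaging-lemma gain in dimension two with one transport direction.

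\textbf{Step 1 (splitting).} Fix $k$ and decompose $u=Au+(1-A)u$, where $A\in\Psi^0(SM)$ is microsupported in a small conic neighborhood of $\V^*$ and $1-A$ has $\WF(1-A)\cap\V^*=\emptyset$. By Theorem \ref{theorem:main}, $(1-A)u\in H^{1/8-}(SM)$. Since $\pi_{k*}$ is bounded $H^{s}(SM)\to H^{s}(M,\kappa^{\otimes k})$ (it commutes with $X,H$ up to lower order terms), the contribution $\pi_{k*}(1-A)u$ lies in $H^{1/8-}$.

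\textbf{Step 2 (averaging near $\V^*$).} Near $\V^*$ we use $Xu=0$ and the Gérard--Golse averaging lemma. Write $u$ locally in coordinates $(x,\theta)$ with $X=\cos\theta\,\partial_{x_1}+\sin\theta\,\partial_{x_2}+(\text{lower order in }\partial_\theta)$. At a spatial frequency $|\zeta|\sim 2^j$, the transport symbol $v(\theta)\cdot\zeta$ is small, of size $\le 2^j\lambda$, only on a $\theta$-set of measure $\lesssim\lambda$. This non-degeneracy holds in dimension two, and curvature terms are of lower order. Averaging against $e^{-ik\theta}$ therefore yields the usual gain of $1/2$ derivative: $\|\Op(\chi_j)\pi_{k*}u\|\lesssim 2^{-j/2}(\|u\|_{L^2}+\|Xu\|_{L^2})$, up to commutator errors. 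This gives $1/2$ spatial regularity by itself.

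\textbf{Step 3 (interpolation).} To beat $1/2$ we interpolate, using the refined Gérard--Golse version: if $u\in H^{\sigma}$ microlocally in the relevant region and $Xu\in H^{\tau}$, then the averages lie in $H^{s}$ with $s$ a convex combination of the exponents plus $1/2$. The extra input is that, near $\V^*$ but at fiber frequency $|\xi_\psi|\lesssim|\xi_{\rm hor}|$, the covectors lie outside a conic neighborhood of $\V^*$, where $u$ is $H^{1/8}$. I would run a dyadic decomposition in the angle $\xi_\beta/\xi_\psi\sim 2^{-m}$ and estimate each piece both by Theorem \ref{theorem:main} (microlocal $H^{1/8}$, with a constant growing polynomially in $2^m$, coming from the exotic-symbol version of Proposition \ref{proposition:estimate}) and by the averaging gain. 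Optimizing over $m$ should give $1/8\cdot\tfrac12+\tfrac12\cdot\ldots$; I expect the bookkeeping to land at $5/8=1/2+1/8$. That value corresponds to the averaging gain of $1/2$ added to the $H^{1/8}$ regularity, transferred via the Pestov-type ellipticity of $\eta_\pm$ relating horizontal regularity of adjacent modes.

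\textbf{Step 4 (using $\eta_\pm$).} Alternatively, and more cleanly, $Xu=0$ gives $\eta_+u_{k-1}+\eta_-u_{k+1}=0$ for every $k$. The $(1-V^2)^{-t}$-weighted bound, combined with ellipticity of $\eta_\pm$ on $\Omega_k$, then upgrades $H^{1/8-}_x H^{t}_\theta$ control to $H^{5/8-}$ control on each mode. Each mode loses finitely many $\theta$-derivatives, which is harmless since $k$ is fixed.

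The main obstacle is Step 3. One must verify that the microlocal $H^{1/8-}$ regularity off $\V^*$ and the averaging gain combine additively, rather than via a worse interpolation. This requires uniform control of the estimates as the cutoff cone around $\V^*$ shrinks.
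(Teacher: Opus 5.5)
There is a genuine gap, and it comes from placing the microlocal difficulty at the wrong covectors. Write $\Pi_k : w\mapsto w_k$ for the projection onto the $k$-th fibre mode.

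Since $\Pi_k(V-ik)=0$ and $V-ik$ is elliptic wherever $\xi_\psi\neq 0$, any $A\in\Psi^0(\M)$ microsupported in a small conic neighbourhood of $\V^*$ can be written $A=(V-ik)QA+R$ with $R$ smoothing. Hence $\Pi_k Au=\Pi_k Ru\in C^\infty(\M)$: the region near $\V^*$ contributes nothing to $u_k$, and no averaging is needed there. Inside $\Sigma$, the only covectors that matter for $u_k$ are those in $\Sigma\cap\{\langle\xi,V\rangle=0\}=\HH^*$. That is exactly where your Step 1 discards the gain, since boundedness of $\Pi_k$ on $H^s$ only returns $H^{1/8-}$.

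Steps 2--3 then look for the extra half derivative near $\V^*$, via a dyadic angular decomposition with unquantified constants. You only ``expect'' the bookkeeping to give $5/8$, and the obstacle you single out (uniformity as the cone around $\V^*$ shrinks) does not arise in the actual problem. Step 4 cannot work either. The relation $\eta_+u_{k-1}+\eta_-u_{k+1}=0$, together with ellipticity of $\eta_\pm$, only transfers a given regularity from one mode to the next without any gain: from $u_{k-1}\in H^{s}$ you get $u_{k+1}\in H^{s}$ modulo a finite-dimensional kernel. This is how the paper uses it afterwards, in Proposition~\ref{proposition:quantitative}, with the present lemma as input. A half-derivative gain cannot come out of this mode-coupling alone.

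The missing idea is to apply the microlocal averaging theorem \cite[Theorem 2.1]{Gerard-Golse-92} directly near $\HH^*$, with the fibre variable $\theta$ as the averaged variable. Its hypothesis is that the Hamilton field $\mathbf{X}$ of $\langle\xi,X\rangle$ be transverse to the hypersurface $\{\langle\xi,V\rangle=0\}$ along $\{\langle\xi,V\rangle=0\}\cap\{\langle\xi,X\rangle=0\}=\Sigma\cap\HH^*$. This is the twist property of Lemma~\ref{lemma:non-tangent}, or explicitly $\mathbf{X}\langle\xi,V\rangle=\langle\xi,[X,V]\rangle=-\xi_\beta\neq 0$ on $\HH^*\setminus\{0\}$.

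The theorem then says that if $Xu=0$ and $u$ is microlocally $H^s$ near $\HH^*$, every $u_k$ lies in $H^{s+1/2}$. The gain is additive by the theorem itself, with no interpolation and no shrinking cones. Feeding in $s=1/8-$ from Theorem~\ref{theorem:main} gives $u_k\in H^{5/8-}$. Your Step 2 heuristic, that the transport symbol is small only on a small set of angles, is precisely this transversality, but it has to be used at $\HH^*$, not at $\V^*$.
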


\begin{proof}
This is based on the following claim: let $u \in \mc{D}'(\M)$ such that $X u = 0$ and $u$ is microlocally $H^s$ near $\HH^*$; then, for all $k \in \Z$, $u_k \in H^{1/2+s}(M,\Omega_k)$. This is a direct consequence of \cite[Theorem 2.1]{Gerard-Golse-92}. The $y$-variable in \cite{Gerard-Golse-92} corresponds to the $\theta$-variable in our case; it thus suffices to check that $\mathbf{X}$ is transverse to $\left\{\langle \xi,V\rangle=0\right\}$ at each point of $\left\{\langle \xi,V\rangle=0\right\} \cap \left\{\langle \xi,X\rangle=0\right\} = \Sigma \cap \HH^*$. Transversality of $\mathbf{X}$ is a simple byproduct of Lemma \ref{lemma:non-tangent} (twist property of the geodesic flow). By Theorem \ref{theorem:main}, if $u \in L^2(SM)$ is flow-invariant, then $u$ is microlocally $H^{1/8-}$ near $\HH^*$ and the claim made at the beginning of this paragraph implies the result of the proposition.
\end{proof}

\subsection{Quantitative estimate} The previous result can be refined in a quantitative way:

\begin{proposition}
\label{proposition:quantitative}
Let $0 \leq \eps < 1/8$. Then there exists $C := C(\eps) > 0$ such that for all $u = \sum_{k \in \Z} u_k \in L^2(SM)$ such that $X u = 0$:
\[
\|u_k\|_{H^{1/2+\eps}_x(SM)}^2 \leq C\left(\langle k \rangle^{1/2+\eps}\|u\|_{L^2(SM)}^2 + \|u_0\|^2_{H^{1/2+\eps}_x(SM)}\right).
\]
\end{proposition}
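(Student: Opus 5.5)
The plan is to make the proof of Lemma~\ref{proposition:regularity-fourier-modes} quantitative, tracking the dependence on $k$. The exponent $1/2+\eps$ with $\eps<1/8$ is exactly the regularity given by Gérard--Golse averaging applied to a function that is microlocally $H^{\eps'}$ near $\HH^*$, with $\eps<\eps'<1/8$. First I would record a quantitative form of Theorem~\ref{theorem:main}. Proposition~\ref{proposition:estimate} is applied to $w=\mathbf{1}_\Lambda u$ with $Xw=0$. Summing over dyadic scales $h=2^{-j}$ as in Lemma~\ref{lemma:sobolev}, this gives, for any $A\in\Psi^0(\M)$ microsupported in a small conic neighbourhood of $\HH^*$,
\[
\|Aw\|_{H^{\eps'}}\le C\|u\|_{L^2}.
\]
The decomposition $u=w+c(1-\mathbf{1}_\Lambda)$ from the proof of Theorem~\ref{theorem:main} controls $|c|\lesssim\|u\|_{L^2}$, and $\mathbf{1}_\Lambda$ is handled by the same bound. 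Hence we get the a priori estimate $\|Au\|_{H^{\eps'}}\lesssim\|u\|_{L^2}$.

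Second, I would split $u=Au+(1-A)u$. Here $1-A$ is microsupported in a conic neighbourhood $\mc{C}'$ of $\V^*$ that avoids $\HH^*$, together with a region where $X$ is elliptic. On the elliptic region, $Xu=0$ gives smoothing estimates bounded by $\|u\|_{L^2}$.
\begin{itemize}
\item For the piece $Au$, I would use the quantitative Gérard--Golse averaging theorem. The transversality of $\mathbf{X}$ to $\{\langle\xi,V\rangle=0\}$ along $\Sigma\cap\HH^*$ (twist property, Lemma~\ref{lemma:non-tangent}) gives $\|\pi_{k*}(Au)\|_{H^{1/2+\eps}}\lesssim\|e^{ik\theta}\|_{H^{\sigma}_\theta}\,\|Au\|_{H^{\eps'}}$. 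Here $\sigma$ is the regularity required of the velocity weight in the averaging lemma. I expect $\sigma=1/4+\eps/2$ to suffice, since in the $1/2$-gain averaging lemma the cost in $\theta$ scales like half the gain in $x$. This yields a contribution $\langle k\rangle^{1/2+\eps}\|u\|_{L^2}^2$ to the squared norm.
\item For the piece near $\V^*$, the frequency satisfies $|(\xi_\alpha,\xi_\beta)|\le c|\xi_\psi|$ on $\mc{C}'$. Mode $k$ has $\xi_\psi\sim k$, so this part of $u_k$ has $x$-frequency $\lesssim|k|$. A crude bound would therefore cost $\langle k\rangle^{1+2\eps}$, which is too much.
\end{itemize}

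The third step treats the $\V^*$ piece, which is where the term $\|u_0\|_{H^{1/2+\eps}_x}$ should enter, through a ladder argument. Flow-invariance reads $\eta_+u_{k-1}+\eta_-u_{k+1}=0$ for all $k$. The Pestov identity \eqref{equation:pestov} with $K\le0$ gives $\|\eta_+w\|\ge\|\eta_-w\|$ for $w\in\Omega_k$, $k\ge0$ (and symmetrically for $k\le0$). I would apply these identities with the weight $(1-X^2-H^2)^{(-1/2+\eps)/2}$ inserted. Since this weight commutes with $V$, the operators $\eta_\pm$ and the weight fail to commute only by lower-order terms involving $K$, which should be absorbed by $\langle k\rangle^{1/2+\eps}\|u\|^2_{L^2}$. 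I would then telescope from mode $0$ to mode $k$ along the parity class of $k$. Modes of the other parity would start from $u_{\pm1}$; to close there I would use that $\eta_-$ is injective on $\Omega_1$ and $\eta_+$ is injective on $\Omega_{-1}$.

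Combining the three steps gives the claimed bound, with $C$ depending on $\eps$ through the constant $C(s)$ of Proposition~\ref{proposition:estimate}. The main obstacle is the third step: turning the Pestov monotonicity, which naturally gives lower bounds on higher modes, into an upper bound on $\|u_k\|_{H^{1/2+\eps}_x}$ for the part microlocalized near $\V^*$. If the telescoping goes the wrong direction, my fallback is to use the $\V^*$-localization to say that for this piece $\eta_+\approx-\eta_-$ up to an $O(|\xi_\beta|/|\xi_\psi|)$ error. The commutator $[\eta_+,\eta_-]=iKV/2$ together with $K\le0$ would then bound the $x$-derivatives of $u_k$ by those of $u_0$ plus an error of size $\langle k\rangle^{1/2+\eps}\|u\|^2_{L^2}$.
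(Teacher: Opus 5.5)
\textbf{There is a genuine gap.} Your second and third steps are not established, and the splitting $u=Au+(1-A)u$ that forces them on you is unnecessary.

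\emph{Second step.} \cite{Gerard-Golse-92} is stated for a fixed smooth velocity weight and gives no explicit dependence of the constant on that weight. The bound with $\|e^{ik\theta}\|_{H^{1/4+\eps/2}_\theta}$ is therefore only a heuristic.

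\emph{Third step.} The relations $\eta_+w_{k-1}+\eta_-w_{k+1}=0$ hold for $w=u$ but not for $w=(1-A)u$. This piece is not $X$-invariant (there is an error $[X,A]u$), and $A$ does not commute with $V$, so it mixes Fourier modes. You do not control either error uniformly in $k$. Inserting $(1-X^2-H^2)^{(-1/2+\eps)/2}$ into the Pestov identity also produces commutators with $\eta_\pm$ that involve $KV$ (because $[X,H]=KV$), hence factors of $k$. You do not check that these can be absorbed.

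\emph{What the iteration needs.} For the iteration from mode $0$ to mode $k$ to close, the one-step inequality must satisfy two conditions. First, its coefficient in front of $\|u_j\|^2_{H^s_x}$ must be $1$; a constant $C>1$ gives growth like $C^{k/2}$. Second, its error must be $C\langle j\rangle^s(\|u_j\|^2_{L^2}+\|u_{j+2}\|^2_{L^2})$ rather than $C\langle j\rangle^s\|u\|^2_{L^2}$; otherwise the sum is of order $\langle k\rangle^{1+s}$. Your sketch secures neither.

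\textbf{The paper's route.} The paper runs the ladder directly on the full modes $u_k$, with no microlocalization. It writes $u_{k+2}=B_k(u_k)+h_{k+2}$, where $B_k$ is the Beurling transform and $h_{k+2}\in\ker\eta_-|_{\Omega_{k+2}}$ is the orthogonal projection of $u_{k+2}$, so $\|h_{k+2}\|\le\|u_{k+2}\|$. The Pestov identity with $K\le0$ gives two endpoint bounds:
\begin{itemize}
\item at $s=0$, $\|B_k\|_{L^2\to L^2}\le1$;
\item at $s=1$, $\|u_{k+2}\|^2_{H^1_x}\le\langle((Ck+1)-X^2-H^2)u_k,u_k\rangle+(Ck+1)\|h_{k+2}\|^2$.
\end{itemize}
Complex interpolation between them, followed by $((Ck+1)-X^2-H^2)^s\le(1-X^2-H^2)^s+(Ck)^s$, yields a one-step bound of exactly the required form. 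Iterating and using $\sum_j\|u_j\|^2\le\|u\|^2$ finishes the proof.

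Microlocal analysis enters only qualitatively, through Lemma~\ref{proposition:regularity-fourier-modes}, to make $\|u_0\|_{H^{1/2+\eps}_x}$ finite. So your first two steps can be dropped, and interpolation is the idea that replaces your weighted commutator estimates.

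\textbf{Odd modes.} Your fix here is wrong. On $\Omega_1$, $\eta_-$ is $\overline\partial$ on $\kappa$, whose kernel is the $\textsl{g}$-dimensional space of holomorphic differentials, so it is not injective; it is $\eta_+$ that is injective on $\Omega_k$ for $k\ge1$. Moreover, the even and odd parts of $u$ are separately flow-invariant, so odd modes cannot be reached from $u_0$ at all. The ladder has to start at $u_{\pm1}$. Their $H^{1/2+\eps}_x$-norms are finite by Lemma~\ref{proposition:regularity-fourier-modes}, and bounded by $C\|u\|_{L^2}$ by the closed graph theorem on the closed subspace $\ker_{L^2}X$. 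The paper's proof is silent on this point as well.
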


The proof is based on an interpolation argument. The operator $\eta_+ : C^\infty(M,\Omega_k) \to C^\infty(M,\Omega_{k+1})$ being elliptic and injective for $k \geq 1$ (see \S\ref{ssection:fourier}), its adjoint $\eta_- = -\eta_+^* : C^\infty(M,\Omega_{k+1}) \to C^\infty(M,\Omega_k)$ is surjective. As a consequence, for all $v \in C^\infty(M,\Omega_{k})$, the equation $\eta_- w = v$ admits a unique solution $w \in C^\infty(M,\Omega_{k+1})$ such that $w$ is $L^2$-orthogonal to $\ker \eta_-|_{\Omega_{k+1}}$ (this also holds for Sobolev distributions). In particular, given $u \in C^\infty(M,\Omega_k)$, there exists a unique solution $B_k(u) \in C^\infty(M,\Omega_{k+2})$ such that:
\[
\eta_+ u + \eta_- B_k(u)=0, \qquad B(u) \perp \ker \eta_-|_{\Omega_{k+2}}.
\]
The operator $B_k$ is called the \emph{Beurling transform} (see \cite[Section 5]{Paternain-Salo-Uhlmann-15}). In addition, the following estimate holds:

\begin{lemma}
\label{lemma:beurling}
For all $k \geq 0$, $B_k : L^2(M,\Omega_k) \to L^2(M,\Omega_{k+2})$ is bounded and $\|B_k\|_{L^2\to L^2} \leq 1$.
\end{lemma}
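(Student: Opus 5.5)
The plan is to combine the Pestov identity \eqref{equation:pestov} with the defining equation $\eta_+ u + \eta_- B_k(u) = 0$. First we reduce to smooth $u \in C^\infty(M,\Omega_k)$; the general $L^2$ statement then follows by density, once the bound is known to be uniform. For $k \geq 0$ we set $w := B_k(u) \in C^\infty(M,\Omega_{k+2})$. Then $\eta_- w = -\eta_+ u$, and therefore
\[
\|\eta_- w\|_{L^2}^2 = \|\eta_+ u\|_{L^2}^2 .
\]

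The core step is to relate $\|\eta_+ w\|^2$ to $\|\eta_- w\|^2$ through the Pestov identity applied to $w$ at level $k+2$:
\[
\|\eta_+ w\|^2 = \|\eta_- w\|^2 - \tfrac{k+2}{2}\langle K w, w\rangle \geq \|\eta_- w\|^2,
\]
where the inequality uses $K \leq 0$. Likewise, the Pestov identity at level $k$ gives $\|\eta_+ u\|^2 \geq \|\eta_- u\|^2$.

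This alone is a statement about derivatives, whereas we need a statement about $L^2$ norms. The intended route is to pass through the identity
\[
\langle \eta_- w, \eta_- w\rangle = -\langle w, \eta_+\eta_- w\rangle = \langle w, \eta_+\eta_+ u\rangle,
\]
and to use the fact that $\eta_+$ and $\eta_-$, restricted to fixed Fourier degree, are (up to the identifications $\eta_+\simeq\partial$, $\eta_-\simeq\overline{\partial}$ on $\kappa^{\otimes k}$) first-order elliptic operators with comparable symbols of modulus $|\xi|/2$. The cleanest way to organize this is variational. Since $w \perp \ker \eta_-$, $w$ minimizes $\|w\|$ among all solutions of $\eta_- w = -\eta_+ u$. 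By duality,
\[
\|w\| = \sup_{f}\frac{|\langle \eta_+ u, f\rangle|}{\|\eta_+ f\|},
\]
where $f$ ranges over $\Omega_{k+1}$ modulo $\ker \eta_+$; this uses $\eta_-^* = -\eta_+$ together with the closed range of $\eta_-$. We then rewrite $\langle \eta_+ u, f\rangle = -\langle u, \eta_- f\rangle$, which gives
\[
|\langle \eta_+ u, f\rangle| \leq \|u\|\,\|\eta_- f\|.
\]
Finally, the Pestov identity at level $k+1 \geq 1$ with $K\leq 0$ yields $\|\eta_- f\| \leq \|\eta_+ f\|$. Combining these gives $\|w\| \leq \|u\|$.

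The main obstacle is justifying the duality formula precisely, namely the closed range and the treatment of the kernels, since $\ker\eta_+$ on $\Omega_{k+1}$ is trivial for $k+1\geq 1$ while $\ker\eta_-$ on $\Omega_{k+2}$ is finite-dimensional. Ellipticity of $\eta_\pm$ on fixed Fourier degree handles this: $\eta_-:\Omega_{k+2}\to\Omega_{k+1}$ is surjective with closed range, so the minimal-norm solution is characterized exactly by the sup formula via the Hahn–Banach theorem and Riesz representation. Density of $C^\infty$ together with the uniform bound then extends $B_k$ to $L^2$ with norm at most $1$.
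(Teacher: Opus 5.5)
Your proof is correct and is essentially the paper's argument. Like the paper, you test $B_k(u)$ against $\eta_+\varphi$, use $\eta_-^*=-\eta_+$ and the defining equation to reduce to $|\langle u,\eta_-\varphi\rangle|\le\|u\|\,\|\eta_-\varphi\|$, then apply the Pestov inequality $\|\eta_-\varphi\|\le\|\eta_+\varphi\|$ at level $k+1$ and take the supremum. Your opening Pestov estimates for $w$ at level $k+2$ and $u$ at level $k$ are never used, while your justification of the supremum through $w\perp\ker\eta_-$, that is $w\in\overline{\ran\eta_+}$, is more accurate than the paper's claim that $\eta_+:\Omega_{k+1}\to\Omega_{k+2}$ is surjective.
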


The proof is standard; we include it for completeness.

\begin{proof}
For all $\varphi \in C^\infty(M,\Omega_{k+1})$, it holds:
\[
\begin{split}
|\langle B_k(u), \eta_+\varphi\rangle_{L^2}| & = |\langle\eta_-B_k(u),\varphi\rangle_{L^2}|=|\langle\eta_+u,\varphi\rangle_{L^2}| = |\langle u,\eta_-\varphi\rangle_{L^2}| \\
& \leq \|u\|_{L^2}\|\eta_-\varphi\|_{L^2} \leq \|u\|_{L^2} \|\eta_+\varphi\|_{L^2},
\end{split}
\]
where the last inequality follows from \eqref{equation:pestov} using that the sectional curvature is nonpositive and $k \geq 0$. As $\eta_+ : C^\infty(M,\Omega_{k+1}) \to C^\infty(M,\Omega_{k+2})$ is surjective, we can take the supremum over all $\varphi \in C^\infty(M,\Omega_{k+1})$ to retrieve the $L^2$-norm of $B_k(u)$. This yields $\|B_k(u)\|_{L^2} \leq \|u\|_{L^2}$.
\end{proof}

We introduce the operator
\[
T_k : C^\infty(M,\Omega_k) \times \ker \eta_-|_{\Omega_{k+2}} \to C^\infty(M,\Omega_{k+2}), \qquad T_k(u,h) := B_k(u) + h.
\]
The following estimate holds:

\begin{lemma}
\label{lemma:sioux}
For all $s \in [0,1]$, there exists a constant $C := C(s) > 0$ such that for all $u \in C^\infty(M,\Omega_k)$ and $h \in \ker \eta_-|_{\Omega_{k+2}}$:
\[
\|T_k(u,h)\|_{H^s_x(SM)}^2 \leq \|u\|^2_{H^s_x(SM)} + C\langle k \rangle^s(\|u\|^2_{L^2(SM)} + \|h\|^2_{L^2(SM)}).
\]
\end{lemma}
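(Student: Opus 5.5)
The plan is to prove the estimate at the endpoints $s=0$ and $s=1$ and then interpolate, treating $u \mapsto B_k(u)$ and $h \mapsto h$ separately. The term $h$ is harmless. Since $\eta_- h = 0$ and $\eta_-$ is identified with $\overline{\partial}$ on $\Omega_{k+2}$, $h$ is a holomorphic section. The Pestov identity \eqref{equation:pestov} with $K \leq 0$ then gives
\[
\|\eta_+ h\|^2_{L^2} = -\tfrac{k+2}{2}\langle K h, h\rangle \leq C\langle k\rangle \|h\|^2_{L^2}.
\]
On $\Omega_{k+2}$ one has $X^2 + H^2 = -2(\eta_+\eta_- + \eta_-\eta_+)$, so $\|h\|^2_{H^1_x} \simeq \|h\|^2_{L^2} + 2\|\eta_+h\|^2 + 2\|\eta_-h\|^2$, which is bounded by $C\langle k \rangle \|h\|^2_{L^2}$. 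Interpolating between this and the trivial $L^2$ bound yields $\|h\|^2_{H^s_x} \leq C\langle k\rangle^s\|h\|^2_{L^2}$ for $s \in [0,1]$.

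For $B_k$, the case $s=0$ is Lemma \ref{lemma:beurling}. For $s=1$, write $w = B_k(u)$, so that $\eta_- w = -\eta_+ u$. Applying \eqref{equation:pestov} to $w \in \Omega_{k+2}$ gives
\[
\|\eta_+ w\|^2 = \|\eta_+ u\|^2 - \tfrac{k+2}{2}\langle Kw,w\rangle \leq \|\eta_+u\|^2 + C\langle k\rangle\|u\|^2_{L^2},
\]
where Lemma \ref{lemma:beurling} is used to bound $\|w\|_{L^2}$. Applying \eqref{equation:pestov} to $u$, with $k \geq 0$ and $K \leq 0$, gives $\|\eta_+u\|^2 \leq \|\eta_- u\|^2 + C\langle k\rangle \|u\|^2$. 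Adding up,
\[
\|\eta_+w\|^2 + \|\eta_-w\|^2 \leq \|\eta_+u\|^2 + \|\eta_-u\|^2 + C\langle k \rangle\|u\|^2_{L^2}.
\]
This is arranged so that the leading $H^1_x$ term of $u$ appears with constant exactly $1$. With the normalization $\|f\|_{H^1_x}^2 = \|f\|^2 + \|Xf\|^2 + \|Hf\|^2 = \|f\|^2 + 2\|\eta_+ f\|^2 + 2\|\eta_-f\|^2$, this gives $\|B_k u\|^2_{H^1_x} \leq \|u\|^2_{H^1_x} + C\langle k\rangle\|u\|^2_{L^2}$.

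The main obstacle is interpolating while keeping the sharp constant $1$ on $\|u\|^2_{H^s_x}$; a naive complex interpolation only gives some constant $C$. I would work spectrally with $P := 1 - X^2 - H^2$ restricted to each $\Omega_j$, which is self-adjoint and positive. Written as quadratic-form inequalities, the two endpoints say
\[
B_k^* B_k \leq 1 \quad\text{and}\quad B_k^* P_{k+2} B_k \leq P_k + C\langle k \rangle \cdot 1 .
\]
The operator $Q := P_k + C\langle k \rangle$ satisfies $Q \geq 1$ and $B_k^* P_{k+2} B_k \leq Q$. The Löwner--Heinz inequality (operator monotonicity of $t \mapsto t^s$ for $s\in[0,1]$), in its Heinz form for a contraction $B_k$, then yields
\[
B_k^* P_{k+2}^s B_k \leq Q^s \leq P_k^s + (C\langle k \rangle)^s .
\]
This is exactly the claim for $B_k$. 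Finally, I would combine the bounds for $B_k u$ and $h$ using $\|a+b\|^2 \leq (1+\tau)\|a\|^2 + (1+\tau^{-1})\|b\|^2$. I expect the constant $1$ to be recovered only up to $1+\tau$, where the $\tau$-loss is absorbed by taking $\tau$ small and paying in $C$; alternatively, if only a constant $C$ is needed downstream, this step is routine. If exactness of the constant is really required, I would instead exploit that $B_k u \perp \ker\eta_-$ and that $h \in \ker \eta_-$ is spectrally supported on low frequencies $\lesssim \langle k\rangle$.
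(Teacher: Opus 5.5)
Your individual estimates are sound. The bound $\|h\|^2_{H^s_x}\le C\langle k\rangle^s\|h\|^2_{L^2}$ is correct. So are the $s=1$ Pestov computation for $w=B_k(u)$ and the Heinz inequality for the contraction $B_k$, which is a rigorous form of the paper's three-lines step. Together they give $\|B_k(u)\|^2_{H^s_x}\le\|u\|^2_{H^s_x}+C\langle k\rangle^s\|u\|^2_{L^2}$ with coefficient exactly $1$. (Minor slip: $X^2+H^2=2(\eta_+\eta_-+\eta_-\eta_+)$, not $-2(\cdots)$; the norm identity you then use is right.)

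\textbf{The gap is the final recombination of $B_k(u)$ and $h$.} The splitting $\|a+b\|^2\le(1+\tau)\|a\|^2+(1+\tau^{-1})\|b\|^2$ leaves an extra $\tau\|B_k(u)\|^2_{H^s_x}$. This is a top-order term: since $u$ may oscillate arbitrarily, it cannot be absorbed into $C\langle k\rangle^s\|u\|^2_{L^2}$ for any choice of $\tau$ and $C$. You therefore only obtain $(1+\tau)\|u\|^2_{H^s_x}$, and this is not a harmless weakening. The lemma is iterated about $k/2$ times in the proof of Proposition~\ref{proposition:quantitative}, where a factor $1+\tau$ would compound into $(1+\tau)^{k/2}$. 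Your fallback is also not accurate as stated. The orthogonality $B_k(u)\perp h$ only kills the $L^2$ cross term. At level $s$ the cross term is $2\Re\langle B_k(u),P_{k+2}^s h\rangle$ with $P:=1-X^2-H^2$, and bounding it by $L^2$ norms needs control of $\|P^s_{k+2}h\|$. That is an $H^{2s}$-type bound, not the $H^s$ bound you proved, and $h$ is not spectrally supported in a frequency band.

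You can close this with what you already have, in either of two ways.

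\emph{Route 1 (the paper's): never split.} The $s=1$ computation works verbatim for $T=B_k(u)+h$, for three reasons:
\begin{itemize}
\item $\eta_-T=-\eta_+u$, since $\eta_-h=0$;
\item \eqref{equation:pestov} on $\Omega_{k+2}$ gives $\|\eta_+T\|^2\le\|\eta_+u\|^2+C\langle k\rangle\|T\|^2$;
\item orthogonality gives $\|T\|^2=\|B_k(u)\|^2+\|h\|^2\le\|u\|^2+\|h\|^2$.
\end{itemize}
Hence $T_k$ is a contraction on $\Omega_k\oplus\ker\eta_-|_{\Omega_{k+2}}$ with $T_k^*P_{k+2}T_k\le\mathrm{diag}(P_k+C\langle k\rangle,\,1+C\langle k\rangle)$. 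Your Heinz argument applied to $T_k$ instead of $B_k$ then yields the statement, with no cross term at all.

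\emph{Route 2: keep your splitting.} Use $[\eta_+,\eta_-]=\tfrac{i}{2}KV$. For $h\in\ker\eta_-|_{\Omega_{k+2}}$ this gives $P_{k+2}h=h-2\eta_-\eta_+h=(1-(k+2)K)h$, so $\|P_{k+2}h\|\le C\langle k\rangle\|h\|$. By Jensen on the spectral measure of $h$, $\|P^s_{k+2}h\|\le(C\langle k\rangle)^s\|h\|$. Then $|2\Re\langle B_k(u),P^s_{k+2}h\rangle|\le(C\langle k\rangle)^s(\|u\|^2+\|h\|^2)$, which is of the allowed form.
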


To avoid cumbersome notation in this proof, we suppress the index $\|\bullet\|_{L^2}$ for the $L^2$ norm.

\begin{proof}
This is established by interpolation. For $s=0$, one has, using Lemma \ref{lemma:beurling}:
\begin{equation}
\label{equation:l2}
\|T_k(u,h)\|^2 = \|B(u)\|^2 + \|h\|^2 \leq \|u\|^2 + \|h\|^2.
\end{equation}
For $s=1$, it holds, using \eqref{equation:pestov}:
\begin{equation}
\label{equation:h1}
\begin{split}
\|T_k(u,h)\|^2_{H^1_x(SM)} & = \|T_k(u,h)\|^2 + 2 (\|\eta_-T_k(u,h)\|^2 + \|\eta_+T_k(u,h)\|^2) \\
& = \|T_k(u,h)\|^2 + 2 (2\|\eta_-T_k(u,h)\|^2 - \tfrac{k}{2}\langle K T_k(u,h), T_k(u,h)\rangle) \\ 
& \leq 4\|\eta_+ u\|^2 + (Ck+1)\|T_k(u,h)\|^2 \\
& \leq 2\|\eta_- u\|^2 + 2\|\eta_+u\|^2 + (Ck+1)\|T_k(u,h)\|^2 \\
& \leq \langle ((Ck+1) - X^2 - H^2)u,u\rangle_{L^2} + (Ck+1)\|h\|^2,
\end{split}
\end{equation}
where the constant $C > 0$ might change from line to line.

We then claim that for all $s \in [0,1]$:
\begin{equation}
\label{equation:interpolation}
\|T_k(u,h)\|^2_{H^s_x(SM)} \leq \langle ((Ck+1) - X^2 - H^2)^su,u\rangle + (Ck+1)^s\|h\|^2
\end{equation}
Indeed, letting $A_s(u,h) := ((Ck+1) - X^2 - H^2)^{-s/2}u, (Ck+1)^{-s/2}h)$, we can rewrite \eqref{equation:h1} as:
\[
\|\langle\Delta \rangle^{1/2} T_k A_1(u,h)\|^2 \leq \|u\|^2 + \|h\|^2.
\]
The inequality \eqref{equation:l2} corresponds to $\|\langle\Delta \rangle^{0} T_k A_0(u,h)\|^2 \leq \|u\|^2 + \|h\|^2$. Applying Hadamard's three-lines theorem to the holomorphic function $s \mapsto \|\langle\Delta \rangle^{s} T_k A_s(u,h)\|^2$ which is bounded by $\|u\|^2+\|h\|^2$ on $\{\Re(s)=0\}$ and $\{\Re(s)=1\}$, we obtain that
\[
\|\langle\Delta \rangle^{s/2} T_k A_s(u,h)\|^2 \leq \|u\|^2 + \|h\|^2, \forall s \in [0,1].
\]
This is the claimed result \eqref{equation:interpolation}.

Finally, using that
\[
\langle(Ck+1) - X^2 - H^2)^su,u\rangle \leq \langle (1-X^2-H^2)^s u,u \rangle + (Ck+1)^s \|u\|^2,
\]
we obtain from \eqref{equation:interpolation} the desired inequality.
\end{proof}

Finally, we can prove Proposition \ref{proposition:quantitative}:

\begin{proof}[Proof of Proposition \ref{proposition:quantitative}]
Let $u = \sum_{k \in \Z} u_k \in L^2(SM)$ such that $Xu=0$. By Lemma \ref{proposition:regularity-fourier-modes}, $u_k \in H^{5/8-}(SM)$. We write $u_k = f_k + h_k$, where $f_k \in \ran \eta_+$ and $h_k \in \ker \eta_-$. Since $\eta_+u_k + \eta_- u_{k+2}$, we have $u_{k+2} = T(u_k, h_{k+2})$. Applying Lemma \ref{lemma:sioux}, we find for $0 \leq s < 5/8$:
\[
\begin{split}
\|u_{k+2}\|^2_{H^s_x(SM)} & \leq \|u_k\|^2_{H^s_x(SM)} + C\langle k \rangle^s(\|u_k\|^2_{L^2(SM)} + \|h_{k+2}\|^2_{L^2(SM)}) \\
&  \leq \|u_k\|^2_{H^s_x(SM)} + C\langle k \rangle^s(\|u_k\|^2_{L^2(SM)} + \|u_{k+2}\|^2_{L^2(SM)}) 
\end{split}
\] 
Iterating this inequality, we obtain the claimed result.
\end{proof}

\subsection{End of proof} We can now complete the proof of Theorem \ref{theorem}:

\begin{proof}[Proof of Theorem \ref{theorem}]
Let $u \in L^2(SM)$ such that $X u = 0$ and decompose $u = \sum_{k \in \Z} u_{k}$. By Lemma \ref{proposition:regularity-fourier-modes}, $u_{k} \in H^{1/2+\eps}_x(SM)$ for every $0 \leq \eps < 1/8$ and $k \in \Z$. It then follows immediately from Proposition \ref{proposition:quantitative} that for every $s > 3/4 + \eps/2$:
\[
\begin{split}
\|u\|^2_{H^{1/2+\eps}_x H^{-s}_\theta(SM)} &= \|(1-X^2-H^2)^{1/4+\eps/2}(1-V^2)^{-s/2}u \|^2_{L^2(SM)} \\
&  = \sum_{k \in \Z} (1+k^2)^{-s} \|u_{k}\|_{H^{1/2+\eps}_x(SM)}^2\\
& \leq C \sum_{k \in \Z} \langle k \rangle^{1/2+\eps} \langle k \rangle^{-2s} < +\infty.
\end{split}
\]
That is $\mathbf{1}_\Lambda \in H^{1/2+\eps}_x H^{-s}_\theta(SM)$.
\end{proof}

\subsection{Vanishing locus of the sectional curvature} Finally, we prove Corollary \ref{corollary:ai}.

\begin{proof}[Proof of Corollary \ref{corollary:ai}]
We consider the case $a \equiv 1$; the general case follows by similar arguments. Let $1/2 < s < 5/8$. Observe that there exists $C > 0$, independent of $u_0$, such that:
\[
\int_J \|u_0(\bullet,y)\|^2_{H^s(I)} |dy| \leq C \|u_0\|^2_{H^s(I \times J)}.
\]
Therefore, for almost every $y \in J$, $t \mapsto u_0(\Phi(t,y))$ belongs to $H^s(I)$. Since $s > 1/2$, $H^s(I) \hookrightarrow C^{s-1/2}(I)$ is a continuous embedding, and $u_0(\Phi(\bullet,y))$ admits a continuous representative $f_y \in C^{s-1/2}(I)$. Since
\[
\int_U u_0(x) |dx| > 0,
\]
there is a subset $J_0 \subset J$ of positive measure such that for all $y \in J_0$, $f_y \geq c_y > 0$ on a subinterval $I_y \subset I$.

We claim that this can be upgraded to the following statement: there exist $c > 0$ and a closed interval $I_0 \subset I$ such that for all $y \in J_0$, $f_y(t) \geq c$ for all $t \in I_0$. Indeed, for $p,q \in \Q$ and $n \in \Z_{> 0}$, consider
\[
A_{p,q,n} := \{y \in J_0 ~:~ f_y(t) \geq 1/n, \forall t \in [p,q]\}.
\]
By construction, $J_0 = \cup_{p,q,n} A_{p,q,n}$, so one of these sets has positive measure, which proves the claim with the corresponding $I_0 = [p,q]$ and $c=1/n$.

We now show that $u_0$ is upper semicontinuous. As $\Lambda \subset SM$ is closed, its characteristic function is upper semicontinuous. Therefore, given $x \in M$ and $x_n \to x$, we have by Fatou's lemma:
\[
\begin{split}
\limsup_{x_n \to x} u_0(x_n) & \leq \int_{S_{x_n}M} \limsup_{x_n \to x} \mathbf{1}_\Lambda(x_n,\theta) |d\theta|/2\pi \\
& \leq \int_{S_xM} \mathbf{1}_\Lambda(x,\theta) |d\theta|/2\pi = u_0(x).
\end{split}
\]

Now, fix $t \in I_0$ and $t_j \to t$ such that $u_0(\Phi(t_j,y)) = f_y(t_j)$. By upper semicontinuity of $u_0$, we then obtain that:
\[
u_0(\Phi(t,y)) \geq f_y(t) \geq c.
\]
This proves the claim.
\end{proof}

\bibliographystyle{alpha}
\bibliography{Biblio}

\end{document}